\newtheorem{theorem}{Theorem}
\newtheorem{lemma}[theorem]{Lemma}
\theoremstyle{definition}
\newtheorem*{remark}{Remark}
\numberwithin{equation}{section}
\def\pp{\mathbb{P}}
\def\nn{\mathbb{N}}
\def\zz{\mathbb{Z}}
\def\rr{\mathbb{R}}
\def\cc{\mathbb{C}}
\def\qq{\mathbb{Q}}
\def\ga{\mathfrak{A}}
\def\gb{\mathfrak{B}}
\def\meas{{\rm meas}}
\def\uZ{{\underline Z}}
\def\us{\underline{s}}
\def\uO{{\underline \Omega}}
\def\uA{{\underline \alpha}}
\def\uhh{{\underline h}}
\def\uom{{\underline \omega}}
\def\uH{{\underline H}}
\def\uuZ{{{\underline P}_{\underline Z}}}
\def\up{{\underline P}}
\def\st{{\widetilde{S}}}
\markboth{\today}{\today}
\begin{document}
\hfill\texttt{\jobname.tex}\qquad\today

\bigskip
\title[Mixed joint discrete universality II]
{On mixed joint discrete universality for a class of zeta-functions II}

\author{Roma Ka{\v c}inskait{\.e} and Kohji Matsumoto}

\address{R. Ka{\v c}inskait{\.e} \\
Department of Mathematics and Statistics, Vytautas Magnus University, Kaunas, Vileikos 8, LT-44404, Lithuania}
\email{roma.kacinskaite@vdu.lt}

\address {K. Matsumoto \\
Graduate School of Mathematics, Nagoya University, Furocho, Chikusa-ku, Nagoya
464-8602, Japan}
\email{kohjimat@math.nagoya-u.ac.jp}

\date{}

\begin{abstract}
We study analytic properties of the pair consisting of a rather general form of zeta-function with an Euler product and a periodic Hurwitz zeta-function with a transcendental parameter. We first survey briefly previous results, and then investigate the mixed joint discrete value distribution and prove the mixed joint discrete  universality theo\-rem for these functions, in the case when common differences of relevant arithmetic progressions are not necessarily the same.   Also some generalizations are given.   For this purpose, certain arithmetic conditions on the common differences are necessary.
\end{abstract}

\maketitle

{\small{Keywords: {arithmetic progression,  joint approximation, Hurwitz zeta-function, periodic coefficients, probability measure, value distribution, weak convergence, Matsumoto zeta-function, Steuding class, universality.}}}

{\small{AMS classification:} 11M06, 11M41, 11M35.}

\section{Introduction: the continuous case}\label{intro}

In 1975, the famous universality property of the well-known Riemann zeta-function $\zeta(s)$, $s=\sigma+it$, was discovered by a Russian mathematician S.M.~Voronin (see \cite{SMV-1975}). He proved that any analytic non-vanishing function can be approximated uniformly on any compact subsets of the strip $\{s \in \cc: \frac{1}{2}< \sigma< 1\}$ by shifts of $\zeta(s)$. In 1981, an Indian mathematician B.~Bagchi showed that  functional limit theorems in the sense of weakly convergent probability measures can be applied to construct an alternative proof of universality (see \cite{BB-1981}).

In the first decade of the XXI century, a new type of universality was introduced. It is the idea of collecting two different types of zeta-functions (one of those zeta-functions has the Euler product expression over primes and the other does not) into one tuple and stu\-dying universality property of such a tuple.   This type of value-distribution and universality has the name of ``mixed joint limit theorem'' and ``mixed joint universality'', respectively.    This is due originally to J.~Steuding and J.~Sander (see \cite{JST-JS-2006}) and independently to  H.~Mishou (see \cite{HM-2007}), in the case of the
Riemann zeta-function $\zeta(s)$ and a Hurwitz zeta-function $\zeta(s,\alpha)$.

In 2011, the first-named author and A.~Laurin\v cikas studied analytic properties of the pair con\-sis\-ting of the periodic zeta-function $\zeta(s;\ga)$ and the periodic Hurwitz ze\-ta-func\-tion $\zeta(s,\alpha;\gb)$ with transcendental parameter $\alpha$ (see \cite{RK-AL-2011}). Note that in ge\-ne\-ral the term ``periodic zeta-function'' means a zeta-function whose coefficients form a certain periodic sequence.

Recall the definitions of the functions $\zeta(s;\ga)$ and $\zeta(s,\alpha;\gb)$. By $\nn, \nn_0, \pp, \qq, \rr$ and $\cc$ we denote the set of all positive integers, non-negative integers, prime numbers, rational numbers, real numbers and complex numbers, respectively.
Let $\mathfrak{A} = \{a_m : m \in \mathbb N \}$ and $\mathfrak{B} = \{ b_m: m \in \mathbb{N}_0 \} $ be two periodic sequences of complex numbers $a_m$ and $ b_{m} $ with minimal positive periods $k \in \mathbb{N}$ and $l\in \mathbb{N}$, respectively.

The periodic zeta-function $ \zeta(s;\mathfrak{A}) $ and the periodic Hurwitz zeta-function $\zeta(s,\alpha; \mathfrak{B})$ with parameter $\alpha$, $0<\alpha\leq 1$, are defined by Dirichlet series
$$
\zeta(s;\mathfrak{A})=\sum_{m=1}^{\infty} \frac{a_m}{m^s} \quad \text{ and } \quad  \zeta(s,\alpha; \mathfrak{B})= \sum_{m=0}^{\infty} \frac{b_m}{(m+\alpha)^s}
$$
for $\sigma>1$, respectively (see \cite{WSH-1930}  and  \cite{AL-2006}).
The periodicity of the sequence $\mathfrak{B}$ implies that, for $ \sigma>1$,
\begin{align*}
\zeta(s,\alpha; \mathfrak{B}) =  \frac{1}{l^s} \sum_{r=0}^{l-1} b_r  \zeta \left( s, \frac{r + \alpha}{l} \right),
\end{align*}
where $\zeta(s,\alpha)$ is the classical Hurwitz zeta-function.
From this expression it can be shown that the function $\zeta(s,\alpha; \mathfrak{B})$ is analytically continued to the whole complex plane, except for a possible simple pole at the point $s=1$ with residues
$$
b:= \frac{1}{l} \sum_{r=0}^{l-1} b_r.
$$
If $b=0$, the function $\zeta(s,\alpha; \mathfrak{B})$ is entire.
Let $D_{\zeta}=\{s:\sigma>1/2\}$ if $\zeta(s,\alpha; \mathfrak{B})$ is entire, and
$D_{\zeta}=\{s:\sigma>1/2,\sigma\neq 1\}$ if $s=1$ is a pole of
$\zeta(s,\alpha; \mathfrak{B})$.

If we assume that the sequence $\ga$ is multiplicative, then $\zeta(s,\ga)$ has
the Euler product, so the tuple $(\zeta(s,\mathfrak{A}), \zeta(s,\alpha; \mathfrak{B}))$
is an example of the ``mixed'' situation.
In \cite{RK-AL-2011}, a mixed joint universality theorem for this tuple has been shown.

Mixed joint universality results for various other tuples of zeta-functions were
proved by Laurin\v cikas and his colleagues (see Section~8 of \cite{KM-2015}, \cite{VP-DS-2014} etc.).

A generalization of the result for the tuple $(\zeta(s;\ga),\zeta(s,\alpha;\gb))$ from \cite{RK-AL-2011} was done
by the authors (see \cite{RK-KM-2015}, \cite{RK-KM-2017-BAMS}).   In order to state the results in those papers, here we
introduce a rather general class of zeta-functions.

For $m\in\nn$, let $g(m)\in\nn$, $f(j,m)\in\nn$, $1\leq j\leq g(m)$, and
$a_m^{(j)}\in\cc$.    Denote by $p_m$ the $m$th prime number.
Assume that $g(m)\leq C_1 p_m^{\alpha}$, $|a_m^{(j)}|\leq p_m^{\beta}$ with constants
$C_1>0$, $\alpha,\beta\geq 0$.
Define the polynomial
$$
A_m(X)=\prod_{j=1}^{g(m)}\left(1-a_m^{(j)} X^{f(j,m)}\right)
$$
and the zeta-function
$$
\widetilde{\varphi}(s)=\prod_{m=1}^{\infty}A_m(p_m^{-s})^{-1}.
$$
Then this is absolutely convergent in the region $\sigma>\alpha+\beta+1$.
We shift the variable and define the shifted zeta-function
$\varphi(s):=\widetilde{\varphi}(s+\alpha+\beta)$, which is convergent in the half-plane  $\sigma>1$.
The Dirichlet series expansion of this function is given by
$$
\varphi(s)=\sum_{k=1}^{\infty}c_k k^{-s}.
$$
We further assume the following conditions:
\begin{enumerate}
  \item[(i)] $\varphi(s)$ can be continued meromorphically to $\sigma\geq\sigma_0$, where
$1/2\leq\sigma_0<1$, and all poles of $\varphi(s)$ in this region
(denote them  by $s_1(\varphi),\ldots,s_l(\varphi)$) are included in a
compact set which has no intersection with the line $\sigma=\sigma_0$;
  \item[(ii)] $\varphi(\sigma+it)=O(|t|^{C_2})$ for $\sigma\geq\sigma_0$ with a certain
$C_2>0$;
  \item[(iii)] it holds the mean value estimate
\begin{align}\label{meanvalue-for-varphi}
\int_0^T |\varphi(\sigma_0+it)|^2 dt =O(T).
\end{align}
\end{enumerate}

We denote the set of all such functions $\varphi(s)$ as $\mathcal{M}$.   This class was first
introduced by the second-named author (see \cite{KM-1990}), and it is sometimes called the class of Matsumoto
zeta-functions.   We let
$$
D_{\varphi}=\{s \in \mathbb{C}:\sigma>\sigma_0, \ \sigma\neq \Re s_j(\varphi), j=1,...,l\}.
$$

To formulate the limit theorem, we need some more notation.
Denote by $\gamma$ the unit circle on the complex plane $\cc$, i.e., $\gamma=\{s \in \cc: |s|=1\}$. Let
$\Omega=\Omega_{1}\times \Omega_2$,
where $\Omega_1$ and $\Omega_2$ are two tori given by
$$
\Omega_{1}:=\prod_{p \in \pp} \gamma_p \quad \text{and} \quad  \Omega_2:=\prod_{m \in \nn_0}\gamma_m
$$
with $\gamma_p=\gamma$ for all  $p \in \pp$ and $\gamma_m=\gamma$ for all $m\in \nn_0$, respectively.
Since, by the Tikhonov theorem, both tori $\Omega_{1}$ and $\Omega_2$ are compact topological Abelian groups with respect to the product topology and pointwise multiplication, then the torus $\Omega$ is also a compact topological Abelian group. Therefore, on $(\Omega,{\mathcal B }(\Omega))$, a probability Haar measure $m_H$ can be defined as a product of Haar measures $m_{H1}$ and $m_{H2}$ on the spaces $(\Omega_{1}, {\mathcal{B}}(\Omega_{1}))$ and $(\Omega_2,{\mathcal B}(\Omega_2))$, respectively. This gives the probability space $(\Omega,{\mathcal B}(\Omega),m_H)$.

For $p \in \pp$, denote by $\omega_{1}(p)$ the projection of $\omega_1 \in \Omega_1$ to the coordinate space $\gamma_p$, and taking into account the decomposition of $m \in \nn$ into product of prime divisors $p^{g_p}$  define
$$
\omega_1(m)=\prod_{p \in \pp}\omega_1(p)^{g_p}.
$$
Also, for an element $\omega_2 \in \Omega_2$, its projection to the coordinate space $\gamma_m$  denote by $\omega_2(m)$, $m \in \nn_0$. Note, that both sequences $\{\omega_1(p): p \in \pp\}$ and $\{\omega_2(m): m\in \nn_0\}$ are the sequences of complex-valued random elements defined on the spaces $(\Omega_1,{\mathcal B}(\Omega_1),m_{H1})$  and $(\Omega_2,{\mathcal B}(\Omega_2),m_{H2})$, respectively.

For any open subset $G$ of the complex plane, by $H(G)$ we mean the space of
holomorphic functions on $G$ with the uniform convergence topology.
Let $D_1$ be an open subset of $D_{\varphi}$, $D_2$ be an open subset of $D_{\zeta}$,
and $\underline{H}=H(D_1)\times H(D_2)$.    Then
$$
\underline{Z}(\underline{s}):=(\varphi(s_1),\zeta(s_2,\alpha;\gb))
$$
belongs to $\underline{H}$; here $\underline{s}:=(s_1,s_2)\in D_1\times D_2$.   For $A\in\mathcal{B}(\underline{H})$, on $(\underline{H}, \mathcal{B}(\underline{H}))$, we define
$$
P_T(A)=\frac{1}{T}\meas\big\{\tau\in[0,T]:\underline{Z}(\underline{s}+i\tau)\in A\big\}
$$
with $T>0$, $\underline{s}+i\tau:=(s_1+i\tau,s_2+i\tau)$, $s_1\in D_1$, $s_2\in D_2$,
and $\meas\{A\}$ denoting the usual Lebesgue measure of the measurable set $A \subset \mathbb{R}$.

Now let $\omega:=(\omega_1,\omega_2) \in \Omega$. Define an $\underline{H}$-valued random element on the probability space $(\Omega,{\mathcal B}(\Omega),m_{H})$ by the formula
$$
\underline{Z}(\underline{s},\omega)=\left(\varphi(s_1,\omega_{1}),\zeta(s_2,\alpha,\omega_2;\gb)\right),
$$
where
\begin{align}\label{RK-for-0}
\varphi(s_1,\omega_1)=\sum_{m=1}^{\infty}\frac{c_m\omega_1(m)}{m^s}
\end{align}
and
$$
\zeta(s_2,\alpha,\omega_2;\gb)=\sum_{m=0}^{\infty}\frac{b_m\omega_2(m)}{(m+\alpha)^s}.
$$
Let $P_{\underline{Z}}$ be the distribution of this element, i.e.,
$$
P_{\underline{Z}}(A)=m_H\left\{\omega\in\Omega :\; \underline{Z}(\us,\omega)\in A\right\},
\quad A\in\mathcal{B}(\underline{H}).
$$

\begin{theorem}[Theorem~1, \cite{RK-KM-2015}]\label{RK-KM-lim}
Let $\alpha$ be a transcendental number, $0<\alpha<1$.   Then  $P_T$ converges weakly to the probability measure $P_{\underline{Z}}$ as $T \to \infty$.
\end{theorem}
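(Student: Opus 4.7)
The plan is to follow the standard Bagchi-type probabilistic scheme, suitably adapted to the mixed joint setting. First I would lift the problem to the compact torus $\Omega=\Omega_1\times\Omega_2$. Define the family
\[
Q_T(A)=\frac{1}{T}\meas\Bigl\{\tau\in[0,T]:\bigl((p^{-i\tau})_{p\in\pp},((m+\alpha)^{-i\tau})_{m\in\no}\bigr)\in A\Bigr\},\quad A\in\mathcal{B}(\Omega).
\]
Using the Fourier transform on the compact abelian group $\Omega$, the weak convergence $Q_T\Rightarrow m_H$ reduces to showing that for every nontrivial character, the corresponding Weyl sum tends to zero. This in turn reduces to the linear independence over $\qq$ of the set $\{\log p:p\in\pp\}\cup\{\log(m+\alpha):m\in\no\}$, which is exactly where the assumption that $\alpha$ is transcendental enters: no finite $\qq$-linear relation can hold because such a relation would give $\alpha$ as a root of a nontrivial polynomial with rational coefficients (after clearing denominators and exponentiating).

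Next I would introduce absolutely convergent approximations. Fix $\theta>1/2$ and let $v_n(m)=\exp(-(m/n)^{\theta})$, $u_n(x)=\exp(-(x/n)^{\theta})$, and set
\[
\varphi_n(s_1)=\sum_{m=1}^{\infty}\frac{c_mv_n(m)}{m^{s_1}},\qquad \zeta_n(s_2,\alpha;\gb)=\sum_{m=0}^{\infty}\frac{b_mu_n(m+\alpha)}{(m+\alpha)^{s_2}},
\]
together with their twisted analogues $\varphi_n(s_1,\omega_1)$ and $\zeta_n(s_2,\alpha,\omega_2;\gb)$ obtained by inserting $\omega_1(m)$ and $\omega_2(m)$. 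These series converge absolutely on $D_\varphi$ and $D_\zeta$ respectively, so the map $\omega\mapsto \underline{Z}_n(\us,\omega):=(\varphi_n(s_1,\omega_1),\zeta_n(s_2,\alpha,\omega_2;\gb))$ is continuous from $\Omega$ into $\underline{H}$. Applying the continuous mapping theorem to the torus result gives that the $\underline{H}$-valued measures $P_{T,n}$ induced by $\tau\mapsto\underline{Z}_n(\us+i\tau)$ converge weakly to the distribution $\hat P_n$ of $\underline{Z}_n(\us,\omega)$ as $T\to\infty$.

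The third step is to approximate the full object by its truncations. On the deterministic side I would prove, for every compact $K_1\subset D_1$ and $K_2\subset D_2$,
\[
\limsup_{n\to\infty}\limsup_{T\to\infty}\frac{1}{T}\int_0^T\Bigl(\sup_{s_1\in K_1}|\varphi(s_1+i\tau)-\varphi_n(s_1+i\tau)|+\sup_{s_2\in K_2}|\zeta(s_2+i\tau,\alpha;\gb)-\zeta_n(s_2+i\tau,\alpha;\gb)|\Bigr)d\tau=0.
\]
For the $\varphi$-piece this relies on a Mellin-transform representation together with the growth bound (ii) and the mean value estimate (\ref{meanvalue-for-varphi}); for the Hurwitz piece one uses the standard mean value estimates for $\zeta(s,(r+\alpha)/l)$ via the functional expression $\zeta(s,\alpha;\gb)=l^{-s}\sum_{r=0}^{l-1}b_r\zeta(s,(r+\alpha)/l)$. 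An analogous almost-sure approximation on the probabilistic side shows $\hat P_n$ converges weakly to $P_{\underline Z}$ as $n\to\infty$. Combining the two approximations via Theorem~4.2 of Billingsley yields $P_T\Rightarrow P_{\underline Z}$.

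The main obstacle is the mean-square approximation of $\varphi$ by $\varphi_n$ in the strip $\sigma_0<\sigma<1$, since $\varphi$ lies in the rather general Matsumoto class and one must carefully combine a contour shift (to pass through the line $\sigma=\sigma_0$, avoiding the finitely many poles $s_j(\varphi)$) with the polynomial growth estimate (ii) and the $L^2$ bound (iii). The second delicate point is the independence step on $\Omega$, where the transcendence of $\alpha$ must be used in full strength to exclude mixed $\qq$-relations between $\log p$'s and $\log(m+\alpha)$'s; everything else is a more or less routine transcription of the classical Bagchi argument to the product setting.
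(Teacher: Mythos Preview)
This theorem is not proved in the present paper; it is quoted from \cite{RK-KM-2015}. The paper does, however, prove its discrete analogue (Theorem~\ref{RK-KM-new-lim}) through Lemmas~\ref{RK-le-1}--\ref{RK-le-4}, and that argument mirrors the one in \cite{RK-KM-2015}, so it is a fair proxy for comparison.

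Your outline is correct and matches the standard Bagchi scheme through the first three steps: the torus limit theorem via characters (with the transcendence of $\alpha$ supplying the required $\qq$-linear independence), the absolutely convergent smoothed approximations $\underline Z_n$, and the mean approximation in $\underline H$. The substantive difference lies in the identification of the limit. You propose to show $\hat P_n\Rightarrow P_{\underline Z}$ directly, by an almost-sure convergence $\underline Z_n(\us,\omega)\to\underline Z(\us,\omega)$ on $\Omega$, and then conclude with Billingsley's Theorem~4.2. The route in \cite{RK-KM-2015} (and in Lemmas~\ref{RK-le-1}--\ref{RK-le-4} here) instead runs the whole approximation scheme a second time for the twisted object $\underline Z(\us+i\tau,\omega)$, deduces that $P_T$ and $P_{T,\omega}$ share a common weak limit $P$, and then identifies $P=P_{\underline Z}$ by ergodicity of the shift $\omega\mapsto a_{\alpha}\omega$ on $\Omega$ together with the Birkhoff--Khintchine theorem. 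Your route is more streamlined and avoids the ergodic step, but the phrase ``analogous almost-sure approximation on the probabilistic side'' hides real work: for the $\varphi$-component the factors $\omega_1(m)$ are only pairwise orthogonal, not independent, so the a.s.\ convergence of $\varphi_n(s,\omega_1)\to\varphi(s,\omega_1)$ uniformly on compacta of $D_1$ is not a direct transcription of the deterministic contour-shift argument. The paper's route trades this for a second mean-value estimate along the flow (the second relation in Lemma~\ref{RK-le-2}) plus an ergodic theorem, machinery that transfers without change to the discrete setting treated later.
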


This type of limit theorem is the key tool in the proof of mixed joint universality
theo\-rems.
However, though the class $\mathcal{M}$ is suitable to prove limit theorems, it is
(at least at present) too wide to discuss the universality.    Therefore we use
another class $\widetilde{S}$, which is a subclass of $\mathcal{M}$. Note that the class $\st$ was introduced by J.~Steuding (see \cite{JST-2007}).

Let $\varepsilon$ be an arbitrarily small positive number.   The Dirichlet series
$$
\varphi(s)=\sum_{m=1}^{\infty}a(m)m^{-s} \quad \text{with} \quad a(m)=O(m^{\varepsilon})
$$
(convergent absolutely in $\sigma>1$)
is said to belong to the class $\widetilde{S}$ if the following conditions hold:
it has the Euler product
$$
\varphi(s)=\prod_{n=1}^{\infty}\prod_{j=1}^l \left(1-a_j(p_n)p_n^{-s}\right)^{-1};
$$
there exists $\sigma_{\varphi}<1$ such that $\varphi(s)$ can be continued
meromorphically to $\sigma>\sigma_{\varphi}$, holomorphic there except for at most
a pole at $s=1$; the order estimate
$$
\varphi(\sigma+it)=O(|t|^{C_3+\varepsilon}),\quad C_3>0,
$$
holds for any $\sigma>\sigma_{\varphi}$, and finally there exists $\kappa>0$ for which
\begin{align}\label{prime-mean-square}
\lim_{x\to\infty}\frac{1}{\pi(x)}\sum_{p_m\leq x}|a(p_m)|^2 =\kappa
\end{align}
holds, where $\pi(x)$ denotes the number of prime numbers up to $x$.

For $\varphi\in\widetilde{S}$, let $\sigma^*$ be the infimum of all $\sigma_1$
for which
$$
\frac{1}{2T}\int_{-T}^T |\varphi(\sigma+it)|^2 dt \sim \sum_{m=1}^{\infty}
\frac{|a(m)|^2}{m^{2\sigma}}
$$
holds for any $\sigma\geq\sigma_1$.    Then $1/2\leq\sigma^*<1$.    (And so,
choosing $\sigma_0=\sigma^*+\varepsilon$, we find that
$\widetilde{S}\subset\mathcal{M}$.)

For convenience, let $D(a,b):=\{s \in \mathbb{C}: a <\sigma <b\}$ for every $a<b$.

\begin{theorem}[Theorem~2, \cite{RK-KM-2015}]\label{RK-KM-univ}
Suppose $\varphi\in\widetilde{S}$.
Let $\alpha$ be a transcendental number, $0<\alpha <1$.
Let $K_1$ be a compact subset of the strip $D\big(\sigma^*,1\big)$, $K_2$ be a
compact subset of the strip $D\big(1/2,1\big)$, both with connected complements.
Suppose that the function $f_1(s)$ is continuous non-vanishing on $K_1$, analytic in
the interior of $K_1$, while the function $f_2(s)$ is continuous on $K_2$, analytic
in the interior of $K_2$. Then, for every $\varepsilon>0$,
\begin{align*}
\liminf_{T \to \infty}\frac{1}{T}\meas \bigg\{\tau \in [0,T]: & \sup_{s \in K_1}|\varphi(s+i\tau)-f_1(s)|<\varepsilon, \cr
&\sup_{s \in K_2}|\zeta(s+i\tau,\alpha;\gb)-f_2(s)|<\varepsilon\bigg\}>0.
\end{align*}
\end{theorem}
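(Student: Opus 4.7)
The plan is to derive Theorem~\ref{RK-KM-univ} from Theorem~\ref{RK-KM-lim} via Bagchi's three-step programme: (a) identify the support of the limit measure $P_{\underline{Z}}$ inside $\underline{H}$, (b) approximate the target pair $(f_1,f_2)$ by an element of that support via Mergelyan-type constructions, and (c) transfer positivity from $P_{\underline{Z}}$ to $P_T$ via the Portmanteau theorem.

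For step (a), the Haar measure factorises as $m_H=m_{H1}\times m_{H2}$, so the random elements $\varphi(\cdot,\omega_1)$ and $\zeta(\cdot,\alpha,\omega_2;\gb)$ are independent and $P_{\underline{Z}}$ is the product of its marginals. I would argue that
$$
\mathrm{supp}(P_{\underline{Z}}) = S_\varphi \times H(D_2), \qquad S_\varphi := \{g\in H(D_1):g(s)\neq 0 \text{ on } D_1\}\cup\{0\}.
$$
For the $\zeta$-marginal, the transcendence of $\alpha$ yields the $\qq$-linear independence of $\{\log(m+\alpha):m\in\nn_0\}$; combined with Kronecker's equidistribution theorem on $\Omega_2$ and a Pechersky-type rearrangement in the Hilbert space of Dirichlet series with weights $(m+\alpha)^{-\sigma}$, this produces Voronin--Bagchi denseness of shifts, giving full support $H(D_2)$. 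For the $\varphi$-marginal, the Euler product together with the mean-value hypothesis \eqref{prime-mean-square} of the class $\widetilde{S}$ yields almost-sure convergence of the random Euler product in $H(D_1)$, and Hurwitz's theorem propagates the non-vanishing of the partial products to the limit; the denseness step mirrors the $\zeta$-case but on the prime-indexed torus $\Omega_1$, yielding exactly $S_\varphi$.

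For step (b), since $K_1$ has connected complement and $f_1$ is continuous, non-vanishing on $K_1$ and analytic in the interior, a continuous branch of $\log f_1$ exists on $K_1$ and is analytic in its interior; Mergelyan's theorem approximates it uniformly by a polynomial $p_1$, giving $g_1:=e^{p_1}\in S_\varphi$ with $\sup_{s\in K_1}|f_1(s)-g_1(s)|<\varepsilon/2$. An analogous polynomial approximation delivers $g_2\in H(D_2)$ with $\sup_{s\in K_2}|f_2(s)-g_2(s)|<\varepsilon/2$. The set
$$
U:=\{(h_1,h_2)\in\underline{H}:\sup_{s\in K_1}|h_1(s)-f_1(s)|<\varepsilon,\ \sup_{s\in K_2}|h_2(s)-f_2(s)|<\varepsilon\}
$$
is open and contains an open neighbourhood of $(g_1,g_2)\in\mathrm{supp}(P_{\underline{Z}})$, so $P_{\underline{Z}}(U)>0$. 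Theorem~\ref{RK-KM-lim} and the Portmanteau inequality then give $\liminf_{T\to\infty}P_T(U)\geq P_{\underline{Z}}(U)>0$, which is exactly the asserted estimate. The main obstacle is step (a): in particular, securing almost-sure convergence of the random Euler product under the Steuding hypothesis \eqref{prime-mean-square} and promoting the non-vanishing of finite products to the infinite product via Hurwitz's theorem; the rearrangement lemma producing $\qq$-linear-independence-based denseness is technical but largely a classical adaptation.
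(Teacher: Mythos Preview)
Your proposal is correct and follows the same Bagchi programme the paper employs. Theorem~\ref{RK-KM-univ} itself is quoted from \cite{RK-KM-2015} rather than re-proved here, but the paper's proof of the discrete analogue (Theorem~\ref{RK-KM-new-disc-univ}) proceeds exactly along your lines: the limit theorem, the identification $\mathrm{supp}(P_{\underline{Z}})=S_\varphi\times H(D_T)$ via the product structure $m_H=m_{H1}\times m_{H2}$ (Lemma~\ref{RK-le-5}, invoking Lemma~5.12 of \cite{JST-2007} for the $\varphi$-factor and \cite{AL-2006} for the Hurwitz factor), then Mergelyan and Portmanteau.
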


A further generalization was done
in \cite{RK-KM-2017-BAMS}.    Let $0<\alpha_j<1$, $j=1,\ldots,r$, $l(j)\in\nn$,
and let $\gb_{jl}=\{b_{mjl}\in\cc:m\in\nn_0\}$ be a periodic sequence of complex numbers with the minimal period $k_{jl}$, $j=1,...,r$, $l=1,...,l(j)$.    Denote by
$k_j$ the least common multiple of $k_{j1},\ldots,k_{jl(j)}$, and let
\begin{align*}
B_j=\begin{pmatrix}
b_{1j1} & b_{1j2} &... & b_{1jl(j)}\cr
b_{2j1} & b_{2j2} &... & b_{2jl(j)}\cr
... & ... &... & ...\cr
b_{k_j j1} & b_{k_j j2} &... & b_{k_j jl(j)}\cr
\end{pmatrix}.
\end{align*}

\begin{theorem}[Theorem~4.2, \cite{RK-KM-2017-BAMS}]\label{RK-KM-gen-univ}
Suppose $\varphi\in\widetilde{S}$, the numbers
$\alpha_1,\ldots,\alpha_r$  are algebraically independent over $\qq$,
${\rm rank} B_j=l(j)$, $j=1,...,r$.
Let $K_1$, $f_1(s)$ be as in Theorem~\ref{RK-KM-univ}, while
$K_{2jl}$ be
compact subsets of the strip $D\big(1/2,1\big)$ with connected complements, and
the functions $f_{2jl}(s)$ are continuous on $K_{2jl}$, analytic
in the interior of $K_{2jl}$. Then, for every $\varepsilon>0$,
\begin{align*}
\liminf_{T \to \infty}\frac{1}{T}\meas \bigg\{\tau \in [0,T]: & \sup_{s \in K_1}|\varphi(s+i\tau)-f_1(s)|<\varepsilon, \cr
&\max_{1\leq j\leq r}\max_{1\leq l\leq l(j)}\sup_{s \in K_{2jl}}|\zeta(s+i\tau,\alpha_j;\gb_{jl})-f_{2jl}(s)|<\varepsilon\bigg\}>0.
\end{align*}
\end{theorem}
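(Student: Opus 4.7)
The plan is to mirror the two-stage strategy used in \cite{RK-KM-2015} for Theorem~\ref{RK-KM-univ} (functional limit theorem plus support identification plus Mergelyan), but with the enlarged probability space that the multi-parameter, multi-sequence setting demands. First I would replace the torus $\Omega_2=\prod_{m\in\nn_0}\gamma_m$ by
$$
\Omega_2^{\ast}=\prod_{j=1}^{r}\Omega_{2,j},\qquad
\Omega_{2,j}=\prod_{m\in\nn_0}\gamma_{m,j},
$$
equip $\Omega^{\ast}:=\Omega_1\times\Omega_2^{\ast}$ with the product Haar measure $m_H^{\ast}$, and let $\omega_{2,j}(m)$ denote the projection to $\gamma_{m,j}$. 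On this extended space I would define the $\underline{H}$-valued random element
$$
\underline{Z}^{\ast}(\underline{s},\omega)=\Bigl(\varphi(s_1,\omega_1),\,\bigl(\zeta(s_{2jl},\alpha_j,\omega_{2,j};\gb_{jl})\bigr)_{j,l}\Bigr),
$$
with $\zeta(s,\alpha_j,\omega_{2,j};\gb_{jl})=\sum_{m\geq 0}b_{mjl}\omega_{2,j}(m)(m+\alpha_j)^{-s}$, and let $P_{\underline{Z}^{\ast}}$ be its distribution.

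The first step is a joint discrete/continuous limit theorem asserting that the measure
$P_T^{\ast}(A)=\frac{1}{T}\meas\{\tau\in[0,T]:\underline{Z}^{\ast}(\underline{s}+i\tau)\in A\}$ converges weakly to $P_{\underline{Z}^{\ast}}$. This is proved exactly as Theorem~\ref{RK-KM-lim}, the only non-trivial input being that the set
$\{\log p:p\in\pp\}\cup\bigcup_{j=1}^{r}\{\log(m+\alpha_j):m\in\nn_0\}$
is linearly independent over $\qq$. That independence is precisely what the algebraic independence of $\alpha_1,\ldots,\alpha_r$ delivers (by an elementary argument going back to Mishou), and it yields the Kronecker--Weyl equidistribution on $\Omega^{\ast}$ needed to push the usual characteristic-function computation through. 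The rest (absolute convergence of approximating series, approximation of $\varphi$ and each $\zeta(s,\alpha_j;\gb_{jl})$ in mean by their truncated analogues, passage to the limit) is routine and carries over verbatim from \cite{RK-KM-2015}.

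The second, and real, step is to identify the support of $P_{\underline{Z}^{\ast}}$. Here I would follow the Bagchi--Laurin\v cikas scheme: one shows that the support equals
$$
S_{\varphi}\times\prod_{j=1}^{r}\prod_{l=1}^{l(j)}H(D_2),
$$
where $S_{\varphi}=\{g\in H(D_1):g(s)\neq 0 \text{ or } g\equiv 0\}$ coming from the Euler product, and the second factor is the full product of $H(D_2)$'s. For the $\varphi$-component this is already in \cite{RK-KM-2015}. The new content is to prove that, as $\omega_{2,j}$ ranges over $\Omega_{2,j}$, the $l(j)$-tuple $(\zeta(s,\alpha_j,\omega_{2,j};\gb_{jl}))_{l=1}^{l(j)}$ has dense range in $H(D_2)^{l(j)}$; independence across $j$ is then automatic by the product structure of $m_H^{\ast}$. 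For a fixed $j$, the standard Hahn--Banach / Pechersky denseness argument reduces matters to showing that a linear combination $\sum_{l=1}^{l(j)}c_{lj}b_{mjl}$ of the periodic coefficients, viewed as functions of $m\in\{0,\ldots,k_j-1\}$, cannot vanish for all $m$ unless $c_{lj}=0$ for every $l$. This is exactly the statement that the matrix $B_j$ has full column rank $l(j)$, which is our hypothesis. The rank assumption is thus used in precisely the same decisive way that the single transcendental $\alpha$ was in Theorem~\ref{RK-KM-univ}, and it is the main obstacle I expect to grind through carefully.

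Once the support is pinned down, the theorem follows by a standard closing argument. Using Mergelyan's theorem, approximate $\log f_1$ on $K_1$ by a polynomial and exponentiate to get $p_1(s)$ with $\sup_{K_1}|p_1-f_1|<\varepsilon/2$ and $p_1$ non-vanishing on $K_1$; approximate each $f_{2jl}$ on $K_{2jl}$ by polynomials $p_{2jl}$ to within $\varepsilon/2$. The tuple $(p_1,(p_{2jl}))$ lies in the support above, so the open $\varepsilon/2$-neighbourhood (in the appropriate product metric) has positive $P_{\underline{Z}^{\ast}}$-measure. The Portmanteau theorem applied to the weak convergence $P_T^{\ast}\Rightarrow P_{\underline{Z}^{\ast}}$ then gives the stated positive $\liminf$ with $\varepsilon$ in place of $\varepsilon/2$, completing the proof.
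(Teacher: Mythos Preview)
This theorem is not proved in the present paper; it is quoted from \cite{RK-KM-2017-BAMS} (Theorem~4.2 there) as background in Section~\ref{intro}. Your outline is correct and coincides with the method of \cite{RK-KM-2017-BAMS}, which is also the template the present paper follows for its own analogous discrete results (Theorems~\ref{RK-KM-new-lim}, \ref{RK-KM-new-disc-univ}, \ref{RK-KM-new-joint-disc-univ}, \ref{RK-KM-new-joint-lim}): a functional limit theorem on the enlarged torus $\Omega_1\times\prod_{j=1}^r\Omega_{2,j}$ driven by the linear independence of $\{\log p\}\cup\bigcup_j\{\log(m+\alpha_j)\}$ (which follows from the algebraic independence of the $\alpha_j$), identification of the support as $S_\varphi\times\prod_{j,l}H(D_2)$ with the rank hypothesis on $B_j$ entering exactly where you indicate, and the standard Mergelyan/Portmanteau closing.
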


Note that a similar result was independently announced by R.~Macaitien\.e
(see \cite{RM-2015}).

\section{Statement of results: the discrete case}\label{sor}

In the previous section, we discussed the universality in which the shif\-ting
parameter $\tau$ is moving continuously.     It is also possible to consider the
situation where the shif\-ting parameter takes only discrete values.    This type of
universality is called the discrete universality.

It is to be stressed that the arithmetic nature of the shifting parameter plays a
role in the proof of discrete universality.    Therefore we may say that the
discrete universality is more arithmetic phenomenon.
Also, from the viewpoint of applications, in pure theoretical or in practical sense, the discrete universality is often more suitable (for the details, see \cite{KM-2015}).

A discrete analogue of Theorem \ref{RK-KM-univ} was shown in
\cite{RK-KM-2017-Pal}.    Let $h>0$, and put
\begin{align*}
L(\alpha,h):=\left\{\log p: p \in \pp\right\}\cup\left\{\log (m+\alpha): m \in \nn_0\right\}\cup\left\{\frac{2\pi}{h}\right\}.
\end{align*}

\begin{theorem}[Theorem~3, \cite{RK-KM-2017-Pal}]\label{RK-KM-disc-univ}
Suppose $\varphi\in\widetilde{S}$, and that the elements of the set $L(\alpha,h)$ are linearly independent over $\qq$.
Let $K_1$, $K_2$, $f_1(s)$, $f_2(s)$ be the same as in Theorem \ref{RK-KM-univ}.
Then, for every $\varepsilon>0$,
\begin{align*}
\liminf_{N \to \infty}\frac{1}{N+1}\# \bigg\{0\leq k\leq N: & \sup_{s \in K_1}|\varphi(s+ikh)-f_1(s)|<\varepsilon, \cr
&\sup_{s \in K_2}|\zeta(s+ikh,\alpha;\gb)-f_2(s)|<\varepsilon\bigg\}>0.
\end{align*}
Here and in what follows, $\# \{A\}$ denotes the cardinality of the set $A$.
\end{theorem}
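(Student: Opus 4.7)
The plan is to follow the classical Bagchi scheme adapted to the discrete setting: establish a discrete functional limit theorem for $\uZ(\us+ikh)$, identify the support of the limiting measure, and then finish by Mergelyan approximation. Throughout, the role of the hypothesis on $L(\alpha,h)$ is to guarantee equidistribution on the infinite-dimensional torus $\Omega$ along the arithmetic progression $\{kh\}_{k\in\no}$.

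First I would prove the discrete analogue of Theorem \ref{RK-KM-lim}: the discrete probability measure
\[
Q_N(A)=\frac{1}{N+1}\#\bigl\{0\le k\le N:\uZ(\us+ikh)\in A\bigr\},\qquad A\in\mathcal{B}(\uH),
\]
converges weakly to $P_{\uZ}$ as $N\to\infty$. The core step is to show that on $(\Omega,\mathcal{B}(\Omega),m_H)$ the sequence
\[
\bigl(\,(p^{-ikh})_{p\in\pp},\,((m+\alpha)^{-ikh})_{m\in\no}\,\bigr),\qquad k=0,1,2,\ldots,
\]
is uniformly distributed with respect to $m_H$. By the Weyl criterion on compact Abelian groups this reduces to verifying, for every non-trivial finite character $\chi$ on $\Omega$, that $\sum_{k=0}^{N}\chi(\omega^k)=o(N)$. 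Writing the character in terms of finitely many primes $p$ and integers $m+\alpha$, the exponential becomes $\exp(ikh\,L)$ with $L$ a non-zero rational combination of elements of $\{\log p\}\cup\{\log(m+\alpha)\}$; since $L(\alpha,h)$ is linearly independent over $\qq$, we have $hL/(2\pi)\notin\qq$, so the geometric sum is $O(1)$. This is the place where the extra element $2\pi/h$ in $L(\alpha,h)$ is indispensable, and it is the main technical obstacle. The passage from equidistribution on $\Omega$ to weak convergence of $Q_N$ then proceeds exactly as in the continuous case in \cite{RK-KM-2015}: one first treats absolutely convergent Dirichlet polynomial truncations $\varphi_n$, $\zeta_n(\cdot,\alpha;\gb)$ (whose distribution is a continuous image of the equidistributed sequence), then shows by Gallagher's lemma together with the mean square estimates built into the class $\widetilde{S}$ (in particular \eqref{meanvalue-for-varphi}) and the standard mean square for $\zeta(s,\alpha;\gb)$ that the tails are negligible in probability.

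Next I would identify the support of $P_{\uZ}$. Because $\Omega=\Omega_1\times\Omega_2$ with product Haar measure and the random elements $\varphi(s_1,\omega_1)$ and $\zeta(s_2,\alpha,\omega_2;\gb)$ depend on independent coordinates, the support of $P_{\uZ}$ is the product of the supports of their individual distributions. For $\varphi\in\widetilde{S}$, the Euler product structure together with \eqref{prime-mean-square} forces the support of the law of $\varphi(\cdot,\omega_1)$ on $H(D_1)$ to be $S_{\varphi}:=\{f\in H(D_1):f(s)\neq 0\text{ on }D_1\}\cup\{0\}$; this is the standard computation going back to Bagchi and Steuding, depending on denseness of sums $\sum_p\log(1-a_j(p)\omega_1(p)p^{-s})^{-1}$ in $H(D_1)$. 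For the periodic Hurwitz component, transcendence of $\alpha$ makes $\{\log(m+\alpha):m\in\no\}$ linearly independent over $\qq$, and a classical density argument (Pechersky's theorem applied to the weighted series $\sum b_m\omega_2(m)(m+\alpha)^{-s}$) yields that the support of the law of $\zeta(\cdot,\alpha,\omega_2;\gb)$ equals all of $H(D_2)$. Hence $\mathrm{supp}\,P_{\uZ}=S_{\varphi}\times H(D_2)$.

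Finally, I would complete the argument by Mergelyan's theorem. Since $f_1$ is non-vanishing and continuous on $K_1$ with connected complement, $f_1(s)=e^{p_1(s)}$ for some continuous $p_1$ analytic in the interior; Mergelyan produces a polynomial $q_1$ with $\sup_{K_1}|f_1-e^{q_1}|<\varepsilon/2$, and similarly a polynomial $q_2$ with $\sup_{K_2}|f_2-q_2|<\varepsilon/2$. The set
\[
G=\bigl\{(g_1,g_2)\in\uH:\sup_{K_1}|g_1-e^{q_1}|<\tfrac{\varepsilon}{2},\ \sup_{K_2}|g_2-q_2|<\tfrac{\varepsilon}{2}\bigr\}
\]
is open in $\uH$ and, by the support description, satisfies $P_{\uZ}(G)>0$. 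The weak convergence $Q_N\Rightarrow P_{\uZ}$ and the portmanteau theorem then give
\[
\liminf_{N\to\infty}Q_N(G)\ge P_{\uZ}(G)>0,
\]
and absorbing the $\varepsilon/2$ errors by the triangle inequality yields the asserted positive lower density. The only serious work, as noted, lies in the equidistribution and tail-truncation steps of the discrete limit theorem; once those are in hand the support and approximation parts are essentially routine transcriptions from the continuous case in \cite{RK-KM-2015}.
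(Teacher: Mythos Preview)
Your proposal is correct and follows essentially the same scheme the paper uses: this particular theorem is only cited here from \cite{RK-KM-2017-Pal}, but the paper proves its generalization (Theorems~\ref{RK-KM-new-lim} and~\ref{RK-KM-new-disc-univ}) via precisely the steps you describe---equidistribution on $\Omega$ from the linear independence of $L(\alpha,h)$, truncation to absolutely convergent series, mean approximation via Gallagher's lemma and the built-in mean-square bounds, identification of the limit as $P_{\uZ}$, the product support computation $S_\varphi\times H(D_2)$, and Mergelyan plus portmanteau. The only point you leave implicit is that the paper carries out the identification $P=P_{\uZ}$ by an ergodicity argument (Birkhoff--Khintchine applied to the shift $\omega\mapsto a_{\alpha,h}\omega$ on $\Omega$), but this is exactly what ``proceeds as in the continuous case'' entails.
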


In the above theorem, the common difference of arithmetic progressions is the same $h$
for both of the zeta-functions.
However it is possible to consider the case when common differences of progressions for
two zeta-functions are different.   The aim of the present paper is to prove
discrete universality results in such a situation, and further give certain
generalizations.

Let $h_1$ and $h_2$ be positive numbers defining common differences of arithmetic progressions, and $N\in\nn$.

Define the set
\begin{align*}
L(\alpha,h_1,h_2):=\left\{h_1 \log p: p \in \pp\right\}\cup\left\{h_2 \log (m+\alpha): m \in \nn_0\right\}\cup\left\{\pi\right\}.
\end{align*}

First we study $P_N$, on $(\underline{H},{\mathcal B}(\underline{H}))$, defined by
the formula
$$
P_N(A):=\frac{1}{N+1}\#\big\{0 \leq k \leq N: \underline{Z}(s_1+ikh_1,s_2+ikh_2)\in A\big\}, \quad A \in {\mathcal B}(\underline{H}).
$$

\begin{theorem}\label{RK-KM-new-lim}
Let $\varphi\in\mathcal{M}$.
Suppose the elements of the set $L(\alpha,h_1,h_2)$ are linearly independent over $\qq$. Then $P_N$ converges weakly to
$P_{\underline{Z}}$ as $N$ tends to infinity.
\end{theorem}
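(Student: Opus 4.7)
The plan is to follow the Bagchi-style framework adapted to the mixed joint discrete setting, as already employed in \cite{RK-KM-2017-Pal} and \cite{RK-KM-2015}, but carefully tracking the two different step sizes $h_1$ and $h_2$.

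\textbf{Step 1 (equidistribution on the torus).} First I would establish an auxiliary limit theorem on $(\Omega,\mathcal{B}(\Omega))$ for the empirical measure
$$Q_N(A)=\frac{1}{N+1}\#\left\{0\leq k\leq N:\bigl((p^{-ikh_1})_{p\in\pp},\,((m+\alpha)^{-ikh_2})_{m\in\nn_0}\bigr)\in A\right\},$$
showing that $Q_N$ converges weakly to the Haar measure $m_H$. Since $\Omega$ is a compact abelian group, by Weyl's criterion this reduces to proving that for every nontrivial character of $\Omega$, determined by finitely many nonzero integer exponents $k_p$, $l_m$, the Cesaro sum
$$\frac{1}{N+1}\sum_{k=0}^{N}\exp\!\left(-ik\Bigl(h_1\sum_{p}k_p\log p+h_2\sum_{m}l_m\log(m+\alpha)\Bigr)\right)$$
tends to $0$. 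The bracket fails to be an integer multiple of $2\pi$ precisely because of the assumed $\qq$-linear independence of $L(\alpha,h_1,h_2)$: rewriting $2\pi n$ as $(2n)\pi$ and invoking linear independence of $\{h_1\log p\}\cup\{h_2\log(m+\alpha)\}\cup\{\pi\}$ forces all $k_p$, $l_m$ to vanish, which is where the element $\pi$ of $L(\alpha,h_1,h_2)$ is needed.

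\textbf{Step 2 (absolutely convergent approximants).} For $n\in\nn$ I would introduce smoothly truncated series
$$\varphi_n(s)=\sum_{k=1}^{\infty}c_k v_n(k)k^{-s},\qquad\zeta_n(s,\alpha;\gb)=\sum_{m=0}^{\infty}b_m u_n(m)(m+\alpha)^{-s},$$
with rapidly decaying weights of the exponential type used in \cite{RK-KM-2015}, together with the randomized versions $\varphi_n(s_1,\omega_1)$ and $\zeta_n(s_2,\alpha,\omega_2;\gb)$. Each is absolutely convergent and defines a continuous map from $\Omega$ into $\underline{H}$, so the continuous mapping theorem combined with Step~1 immediately yields the analogous discrete limit theorem for the pair $(\varphi_n,\zeta_n)$.

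\textbf{Step 3 (approximation and conclusion).} To descend from $(\varphi_n,\zeta_n)$ to $\uZ$, I would prove, for every compact $K_1\subset D_{\varphi}$ and $K_2\subset D_{\zeta}$,
$$\lim_{n\to\infty}\limsup_{N\to\infty}\frac{1}{N+1}\sum_{k=0}^{N}\sup_{s\in K_1}|\varphi(s+ikh_1)-\varphi_n(s+ikh_1)|=0,$$
the same with $\zeta(\cdot,\alpha;\gb)$ in place of $\varphi$ (with step $h_2$), and the matching $L^1(\Omega,m_H)$ bounds for the randomized processes. A standard approximation lemma in the theory of weak convergence then ensures that $P_N$ has the same limit as the truncated measures, and identifying that limit through the definitions \eqref{RK-for-0} of $P_{\underline Z}$ completes the proof. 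The technical crux, and the main obstacle, lies precisely here: Mellin-inverting $v_n$ reduces each average to a contour integral involving $\varphi(\sigma_0+ikh_1+iv)$ plus residue contributions from the poles $s_1(\varphi),\ldots,s_l(\varphi)$, and closing the estimate requires a discrete mean-square bound of the shape
$$\sum_{k=0}^{N}|\varphi(\sigma_0+ikh_1+iv)|^2\ll(N+|v|)(1+|v|)^{C_4}$$
with polynomial control in $|v|$. This bound must be deduced from the purely continuous hypothesis \eqref{meanvalue-for-varphi} together with the growth estimate (ii), via Gallagher's lemma; pushing this through uniformly in $v$ and for the step size $h_1$ is the part requiring genuine care, although the mechanism parallels that used in \cite{RK-KM-2017-Pal}. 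The corresponding discrete moment for $\zeta(s,\alpha;\gb)$ with step $h_2$ is standard.
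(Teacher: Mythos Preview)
Your proposal is correct and follows the same Bagchi-type scaffolding as the paper: equidistribution on the torus $\Omega$ via the linear independence of $L(\alpha,h_1,h_2)$, a limit theorem for the absolutely convergent approximants $(\varphi_n,\zeta_n)$ by continuous mapping, and passage to $\underline{Z}$ through mean-value approximation estimates obtained with the help of Gallagher's lemma.

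There is one genuine difference worth noting in the identification of the limit. The paper carries along, in parallel to $P_{N,n}$ and $P_N$, the randomized empirical measures $P_{N,n,\widehat{\omega}}$ and $P_{N,\omega}$, proves that $P_N$ and $P_{N,\omega}$ share a common weak limit $P$, and then identifies $P=P_{\underline{Z}}$ by showing that the shift $\omega\mapsto a_{\alpha,h_1,h_2}\omega$ on $(\Omega,m_H)$ is ergodic and invoking the Birkhoff--Khintchine theorem. Your Step~3 bypasses this entirely: since Step~2 already pins down the limit $P_n$ of $P_{N,n}$ as the law of $\underline{Z}_n(\cdot,\omega)$ under $m_H$, the $L^1(\Omega,m_H)$ approximation you mention gives $P_n\Rightarrow P_{\underline{Z}}$ directly, and Billingsley's approximation lemma then yields $P_N\Rightarrow P_{\underline{Z}}$ without any ergodic input. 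Your route is therefore more economical (no $\widehat{\omega}$-measures, no second half of the approximation lemma, no ergodicity), while the paper's route yields the extra information that $P_{N,\omega}\Rightarrow P_{\underline{Z}}$ for $m_H$-almost every $\omega$ and makes the dynamical-systems connection explicit.
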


A generalization of Theorem \ref{RK-KM-disc-univ} is as follows.

\begin{theorem}\label{RK-KM-new-disc-univ}
Suppose that $\varphi\in\widetilde{S}$, and that the elements of the set $L(\alpha,h_1,h_2)$ are li\-near\-ly independent over $\qq$.  Let $K_1$, $K_2$, $f_1(s)$ and  $f_2(s)$ be as in Theorem~\ref{RK-KM-univ}. Then, for every $\varepsilon>0$,
\begin{align*}
\liminf_{N \to \infty}\frac{1}{N+1}\#
\bigg\{0 \leq k \leq N : & \sup_{s \in K_1}|\varphi(s+i k h_1)-f_1(s)|<\varepsilon, \cr
&\sup_{s \in K_2}|\zeta(s+i k h_2,\alpha;\gb)-f_2(s)|<\varepsilon\bigg\}>0.
\end{align*}
\end{theorem}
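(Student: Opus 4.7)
I would follow the classical Bagchi scheme: use the limit theorem (Theorem~\ref{RK-KM-new-lim}) to reduce universality to a statement about the limit measure $P_{\underline{Z}}$, then transfer the conclusion via Mergelyan's approximation and the Portmanteau theorem. Concretely, I choose open strips $D_1,D_2$ with $K_1 \subset D_1 \subset D(\sigma^*,1)$ and $K_2 \subset D_2 \subset D(1/2,1)$, so that Theorem~\ref{RK-KM-new-lim} applies on $\underline{H}=H(D_1)\times H(D_2)$; its hypotheses hold because $\varphi \in \widetilde{S} \subset \mathcal{M}$ and $L(\alpha,h_1,h_2)$ is $\qq$-linearly independent by assumption.

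Next, I would identify the support of $P_{\underline{Z}}$ as $S_\varphi \times H(D_2)$, where
$$S_\varphi := \{\, g \in H(D_1) : g(s) \neq 0 \text{ for all } s \in D_1 \,\} \cup \{0\}.$$
The second factor uses that $\alpha$ is transcendental and that $\{\log(m+\alpha):m\in\nn_0\}$, which is contained in $L(\alpha,h_1,h_2)$, is $\qq$-linearly independent: the standard density argument of Bagchi and Laurin\v cikas then shows $\zeta(s_2,\alpha,\omega_2;\gb)$ fills $H(D_2)$. The first factor uses the Euler product structure of $\widetilde{S}$ and the mean-square condition~(\ref{prime-mean-square}); through Steuding's denseness theorem, the support of the distribution of $\varphi(s_1,\omega_1)$ is exactly $S_\varphi$. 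The product form of the support follows from $m_H = m_{H1} \times m_{H2}$, which makes $\omega_1$ and $\omega_2$ independent.

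I would then apply Mergelyan's theorem twice. Since $K_1$ has connected complement and $f_1$ is continuous non-vanishing on $K_1$ and analytic in its interior, a continuous logarithm of $f_1$ exists on $K_1$, so there is a polynomial $p_1$ with $\sup_{s\in K_1}|f_1(s) - \exp(p_1(s))| < \varepsilon/2$; similarly a polynomial $p_2$ satisfies $\sup_{s\in K_2}|f_2(s) - p_2(s)| < \varepsilon/2$. Since $\exp(p_1) \in S_\varphi$, the pair $(\exp(p_1),p_2)$ lies in the support of $P_{\underline{Z}}$, so the open set
$$G := \Big\{(g_1,g_2) \in \underline{H} : \sup_{s\in K_1}|g_1(s) - \exp(p_1(s))| < \tfrac{\varepsilon}{2},\ \sup_{s\in K_2}|g_2(s) - p_2(s)| < \tfrac{\varepsilon}{2}\Big\}$$
satisfies $P_{\underline{Z}}(G) > 0$. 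Combining Theorem~\ref{RK-KM-new-lim} with the Portmanteau theorem yields $\liminf_{N\to\infty} P_N(G) \geq P_{\underline{Z}}(G) > 0$, and the triangle inequality turns this into the stated bound.

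The main obstacle is the support identification in the second step: when $h_1 \neq h_2$, the discrete shifts in $P_N$ act on the prime and Hurwitz parts at two different rates, so one cannot reduce matters to a single torus rotation as was implicitly done in the setting of Theorem~\ref{RK-KM-disc-univ}. The inclusion of $\pi$ in $L(\alpha,h_1,h_2)$ together with the full $\qq$-linear independence hypothesis is precisely the arithmetic input needed to decouple the two rotations; once this decoupling is in place, the product structure of $P_{\underline{Z}}$ (and hence of its support) mirrors the continuous case, and the remainder of the argument proceeds as above.
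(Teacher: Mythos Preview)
Your proposal is correct and mirrors the paper's argument: the paper likewise derives Theorem~\ref{RK-KM-new-disc-univ} from Theorem~\ref{RK-KM-new-lim}, a support lemma (Lemma~\ref{RK-le-5}, proved via the product structure $m_H=m_{H1}\times m_{H2}$, Steuding's denseness result for $\varphi\in\widetilde{S}$, and the linear independence of $\{\log(m+\alpha)\}$ for the Hurwitz factor), and Mergelyan's theorem, referring the Portmanteau/triangle-inequality endgame to \cite{RK-KM-2017-Pal}. One small clarification on your final paragraph: the support of $P_{\underline{Z}}$ is determined solely by the Haar measure $m_H$ and does not involve $h_1,h_2$ at all, so the arithmetic decoupling you highlight is consumed entirely inside the proof of Theorem~\ref{RK-KM-new-lim} rather than in the support identification.
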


These two theorems will be proved in the next section.
For the proof, we will use the method of limit theorems in the sense of weakly convergent probability measures in the space of analytic functions developed by A.~Laurin\v cikas (see, for example, \cite{AL-1996}) and probabilistic approach introduced by B.~Bagchi (see \cite{BB-1981}).

\begin{remark}
The first result on the discrete universality was obtained  by A.~Reich (see \cite{AR-1980}), where Dedekind zeta-functions were studied.
Discrete analogues of the Mishou theorem (or the mixed universality theorem, see \cite{HM-2007}) for $\zeta(s)$ and $\zeta(s,\alpha)$ were proved by E.~Buivydas and A.~Laurin\v cikas (see \cite{EB-AL-2015-RJ}, \cite{EB-AL-2015-LMJ}).
In \cite{EB-AL-2015-RJ}, the common difference of arithmetic progressions for both the functions is the same, while in \cite{EB-AL-2015-LMJ}, common differences are not necessarily the same for both the functions.   Therefore the above Theorem \ref{RK-KM-disc-univ} and Theorem
\ref{RK-KM-new-disc-univ} are generalizations of the results in
\cite{EB-AL-2015-RJ} and \cite{EB-AL-2015-LMJ}, respectively.

Finally note that the first attempt to prove mixed universality theorem for a Dirichlet $L$-func\-tion and a periodic Hurwitz zeta-function was done by the first-named author in 2009  (see \cite{RK-2009}). But in the proof there exists a gap.  The comments how to fulfill this gap and how to prove a mixed universality theorem for a certain modified
zeta-function can be found in the authors' works (see \cite{RK-KM-2017-Pal}, \cite{RK-KM-draft}).
\end{remark}

\section{Proof of Theorems~\ref{RK-KM-new-lim} and \ref{RK-KM-new-disc-univ}}

The proof of Theorem~\ref{RK-KM-new-lim} goes along the same line as that of Theorem~4 in \cite{EB-AL-2015-LMJ}. Therefore we present only some essential points of the proof, as a series of lemmas.

Let $\varphi\in\mathcal{M}$.
First we show a discrete mixed limit theorem for absolutely convergent Dirichlet series.
For a fixed number $\sigma_0^*>\frac{1}{2}$  and $n \in \nn$, we put
$$
v_1(m,n)=\exp\left(-\left(\frac{m}{n}\right)^{\sigma_0^*}\right),\quad m\in\nn,
$$
$$
v_2(m,n,\alpha)=\exp\left(-\left(\frac{m+\alpha}{n+\alpha}\right)^{\sigma_0^*}\right),
\quad m\in\nn_0,
$$
and define
$$
\underline{Z}_n({\underline s})=\left(\varphi_n(s_1),\zeta_n(s_2,\alpha;\gb)\right)
$$
with
$$
\varphi_n(s_1)=\sum_{m=1}^{\infty}\frac{c_mv_1(m,n)}{m^{s_1}}\quad \text{and} \quad
\zeta_n(s_2,\alpha;\gb)=\sum_{m=0}^{\infty}\frac{b_mv_2(m,n,\alpha)}{(m+\alpha)^{s_2}}.
$$
These series converge absolutely for $\Re s_j>\frac{1}{2}$, $j=1,2$. Moreover, for
$\omega\in \Omega$, let
$$
\underline{Z}_n(\underline{s},\omega)=\left(\varphi_n(s_1,\omega_1),\zeta_n(s_2,\alpha,\omega_2;\gb)\right)
$$
with
$$
\varphi_n(s_1,\omega_1)=\sum_{m=1}^{\infty}\frac{c_m\omega_1(m) v_1(m,n)}{m^{s_1}}\ \text{ and } \
\zeta_n(s_2,\alpha,\omega_2;\gb)=\sum_{m=0}^{\infty}\frac{b_m \omega_2(m)v_2(m,n,\alpha)}{(m+\alpha)^{s_2}}.
$$
The series $\varphi_n(s_1,\omega_1)$ and $\zeta_n(s_2,\alpha,\omega_2;\gb)$ also converge for $\Re s_j>\frac{1}{2}$, $j=1,2$.

For $A \in {\mathcal B}(\underline{H})$, define
\begin{align*}
P_{N,n}(A):=\frac{1}{N+1}\#\bigg\{0 \leq k \leq N: \underline{Z}_n(s_1+ikh_1,s_2+ikh_2) \in A
\bigg\}
\end{align*}
and, for a fixed element ${\widehat \omega}:=\big({\widehat \omega}_1,{\widehat \omega}_2\big)\in \Omega$,
\begin{align*}
P_{N,n,{\widehat \omega}}(A):=\frac{1}{N+1}\#\bigg\{0 \leq k \leq N: \underline{Z}_n(s_1+ikh_1,s_2+ikh_2,{\widehat\omega}) \in A\bigg\}.
\end{align*}

\begin{lemma}\label{RK-le-1}
Suppose that the elements of the set $L(\alpha,h_1,h_2)$ are linearly independent over $\qq$. Then both measures $P_{N,n}$ and $P_{N,n,{\widehat \omega}}$ converge weakly to the same probability measure, which we denote by $P_n$, on $(\underline{H},$ ${\mathcal B}(\underline{H}))$ as $N \to \infty$.
\end{lemma}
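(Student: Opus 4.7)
The plan is to pull the measures $P_{N,n}$ and $P_{N,n,\widehat{\omega}}$ back to the torus $\Omega$, where they become empirical measures along a single orbit of a one-parameter translation on a compact Abelian group, and then invoke the classical Weyl equidistribution criterion. Throughout, the exponential cut-offs built into $v_1(m,n)$ and $v_2(m,n,\alpha)$ are what allows this purely geometric argument to work.

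Concretely, consider the map $u_n\colon\Omega\to\underline{H}$ defined by
$$
u_n(\omega_1,\omega_2) := \bigl(\varphi_n(s_1,\omega_1),\,\zeta_n(s_2,\alpha,\omega_2;\gb)\bigr).
$$
The exponential decay of $v_1(m,n)$ and $v_2(m,n,\alpha)$, together with $|\omega_j(m)|=1$, makes both defining series converge absolutely and uniformly on compact subsets of $\{\sigma>1/2\}$, uniformly in $\omega\in\Omega$; hence $u_n$ is continuous. For each $k\in\nn_0$ set
$$
\omega_1^{(k)}(p):=p^{-ikh_1}\ (p\in\pp),\qquad \omega_2^{(k)}(m):=(m+\alpha)^{-ikh_2}\ (m\in\no),
$$
and extend $\omega_1^{(k)}$ to $\nn$ multiplicatively. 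Then $\underline{Z}_n(s_1+ikh_1,s_2+ikh_2)=u_n\bigl(\omega_1^{(k)},\omega_2^{(k)}\bigr)$ and $\underline{Z}_n(s_1+ikh_1,s_2+ikh_2,\widehat{\omega})=u_n\bigl(\widehat{\omega}\cdot(\omega_1^{(k)},\omega_2^{(k)})\bigr)$, so $P_{N,n}$ and $P_{N,n,\widehat{\omega}}$ are the push-forwards under $u_n$ of the empirical measures
$$
Q_N:=\frac{1}{N+1}\sum_{k=0}^N \delta_{(\omega_1^{(k)},\omega_2^{(k)})},\qquad Q_N^{\widehat{\omega}}:=\frac{1}{N+1}\sum_{k=0}^N \delta_{\widehat{\omega}\cdot(\omega_1^{(k)},\omega_2^{(k)})}.
$$

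The heart of the argument is showing that both $Q_N$ and $Q_N^{\widehat{\omega}}$ converge weakly to the Haar measure $m_H$ on $\Omega$. Since $\Omega$ is a compact Abelian group, by the Weyl criterion this reduces to verifying, for every non-trivial continuous character $\chi$ of $\Omega$, that
$$
\frac{1}{N+1}\sum_{k=0}^N \chi\bigl(\omega_1^{(k)},\omega_2^{(k)}\bigr)\longrightarrow 0 \qquad (N\to\infty);
$$
the analogous statement for $Q_N^{\widehat{\omega}}$ is immediate, since the corresponding sum picks up only the bounded constant factor $\chi(\widehat{\omega})$. A character has the form $\chi(\omega_1,\omega_2)=\prod_p\omega_1(p)^{k_p}\prod_m\omega_2(m)^{l_m}$ with finitely many non-zero integer exponents $k_p,l_m$, not all zero, so the Cesàro mean above is a geometric sum with ratio $e^{-i\beta}$, where
$$
\beta=h_1\sum_p k_p\log p + h_2\sum_m l_m\log(m+\alpha).
$$
The assumed $\qq$-linear independence of the elements of $L(\alpha,h_1,h_2)$ precludes any equality $\beta=2\pi j$ with $j\in\zz$ in the non-trivial case, because $2\pi$ is a rational multiple of $\pi$. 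Thus $e^{-i\beta}\neq 1$, and the mean decays like $O(N^{-1})$. This is the only step that genuinely uses the hypothesis, and is the main technical obstacle of the lemma.

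Finally, by the continuous mapping theorem applied to the continuous map $u_n$, both $P_{N,n}=Q_N\circ u_n^{-1}$ and $P_{N,n,\widehat{\omega}}=Q_N^{\widehat{\omega}}\circ u_n^{-1}$ converge weakly to the common limit $P_n:=m_H\circ u_n^{-1}$, which is the desired probability measure on $(\underline{H},\mathcal{B}(\underline{H}))$.
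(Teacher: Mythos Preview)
Your proof is correct and follows essentially the same approach as the paper: the paper reduces the lemma to the weak convergence of the empirical measures on the torus $\Omega$ to the Haar measure (quoting Lemma~1 of \cite{EB-AL-2015-LMJ}), which is precisely your $Q_N\Rightarrow m_H$ established via the Fourier transform/Weyl criterion, and then transfers this to $\underline{H}$ via the continuous map $u_n$. Your explicit identification of $P_n=m_H\circ u_n^{-1}$ and your handling of the $\widehat{\omega}$-shift by the multiplicative factor $\chi(\widehat{\omega})$ are exactly the mechanisms implicit in the paper's sketch (and spelled out in its proof of the generalized Lemma~\ref{RK-le-j-torus}).
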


In the proof of this lemma, a crucial role is played by the weak convergence of the measure
\begin{align*}
\frac{1}{N+1}\#\left\{0 \leq k \leq N: \big(\big(p^{-ikh_1}: p \in \pp\big),\big((m+\alpha)^{-ikh_2}: m\in \nn_0\big)\big)\in A\right\}, \quad A \in {\mathcal B}(\Omega),
\end{align*}
to the Haar measure $m_H$ as $N \to \infty$.
This is Lemma 1 of \cite{EB-AL-2015-LMJ},
whose proof depends on the linear independence of the elements of the set $L(\alpha,h_1,h_2)$.

Next, we approximate by mean the tuples  $\underline{Z}({\underline s})$ and $\underline{Z}({\underline s},\omega)$ by the tup\-les $\underline{Z}_n(\underline{s})$ and $\underline{Z}_n(\underline{s},\omega)$, respectively.
Let ${\underline \varrho}$ be the metric on $\underline{H}$ defined in Section 3 of
\cite{EB-AL-2015-LMJ} (or Section 2 of \cite{RK-KM-2017-Pal}).

\begin{lemma}\label{RK-le-2}
Suppose that the elements of $L(\alpha,h_1,h_2)$ are linearly independent over $\qq$. The relations
\begin{align*}
\lim_{n \to \infty}\limsup_{N \to \infty}\frac{1}{N+1}\sum_{k=0}^{N}
{\underline \varrho}\big(\underline{Z}(s_1+ikh_1,s_2+ikh_2),\underline{Z}_n(s_1+ikh_1,s_2+ikh_2)\big)=0
\end{align*}
and, for almost all $\omega \in \Omega$,
\begin{align*}
\lim_{n \to \infty}\limsup_{N \to \infty}\frac{1}{N+1}\sum_{k=0}^{N}
{\underline \varrho}\big(\underline{Z}(s_1+ikh_1,s_2+ikh_2,\omega),\underline{Z}_n(s_1+ikh_1,s_2+ikh_2,\omega)\big)=0
\end{align*}
are true.
\end{lemma}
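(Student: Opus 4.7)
The metric $\underline{\varrho}$ is a product of metrics on $H(D_1)$ and $H(D_2)$, each built from an exhausting sequence of compact sets and weighted sup-norms. Consequently, both relations reduce to showing, for every compact $K\subset D_1$,
\begin{align*}
\lim_{n\to\infty}\limsup_{N\to\infty}\frac{1}{N+1}\sum_{k=0}^{N}\sup_{s\in K}|\varphi(s+ikh_1)-\varphi_n(s+ikh_1)|=0,
\end{align*}
together with the analogues for the second coordinate and for $\omega$. All four cases are handled by the same Mellin--Barnes scheme, so I describe the $\varphi$-coordinate in the deterministic case and indicate the modifications elsewhere.

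Starting from the Mellin pair $\exp(-x^{\sigma_0^*})\leftrightarrow(1/\sigma_0^*)\Gamma(z/\sigma_0^*)$, I would write
\begin{align*}
\varphi_n(s)=\frac{1}{2\pi i}\int_{c-i\infty}^{c+i\infty}\varphi(s+z)\,\ell_n(z)\,dz,\qquad \ell_n(z):=\frac{n^z}{\sigma_0^*}\,\Gamma\!\left(\frac{z}{\sigma_0^*}\right),
\end{align*}
for $c$ sufficiently large, and then shift the contour leftward to $\Re z=c'<0$, chosen so that $\sigma':=\Re s+c'>\sigma_0$ for every $s\in K$ (possible since $K$ is compact in $D_\varphi$). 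The residue at $z=0$ supplies exactly $\varphi(s)$ and cancels against the subtracted term, while each residue at $z=s_j(\varphi)-s-ikh_1$ has size $n^{\Re s_j(\varphi)-\Re s}|\Gamma((s_j(\varphi)-s-ikh_1)/\sigma_0^*)|/\sigma_0^*$. Since the imaginary part grows linearly in $k$, Stirling yields an exponential decay $e^{-\pi kh_1/(2\sigma_0^*)}$, so $\sum_{k=0}^{N}|\text{residue}|$ stays bounded in $N$ for fixed $n$; dividing by $N+1$ kills this contribution in $\limsup_N$.

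What remains is the shifted integral. Pulling $\sup_{s\in K}$ inside via Cauchy's formula on a slightly enlarged contour around $K$, then applying Fubini and Cauchy--Schwarz, I obtain a bound of the form
\begin{align*}
\frac{1}{N+1}\sum_{k=0}^{N}\sup_{s\in K}|I_n(s+ikh_1)| \ll \int_{-\infty}^{\infty}(1+|\tau|)\,|\ell_n(c'+i\tau)|\,d\tau,
\end{align*}
where the polynomial factor $1+|\tau|$ arises from the discrete second-moment bound
\begin{align*}
\frac{1}{N+1}\sum_{k=0}^{N}|\varphi(\sigma'+i\tau+ikh_1)|^2 \ll 1+|\tau|^2,
\end{align*}
obtained by feeding hypothesis (iii) into Gallagher's lemma. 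Stirling applied to $\ell_n$, together with the prefactor $n^{c'}$, then gives an overall bound $O(n^{c'})=o(1)$ as $n\to\infty$. For the random version, the same Mellin representation applies to $\varphi_n(s,\omega_1)-\varphi(s,\omega_1)$, and Birkhoff--Khinchin applied to the discrete translation $\omega\mapsto((p^{-ikh_1}\omega_1(p))_p,((m+\alpha)^{-ikh_2}\omega_2(m))_m)$ on $(\Omega,m_H)$---whose ergodicity is precisely the linear-independence input of Lemma~\ref{RK-le-1}---replaces the discrete mean by $\int_\Omega\underline{\varrho}(\underline{Z}(\us,\omega),\underline{Z}_n(\us,\omega))\,dm_H$ for $m_H$-almost every $\omega$. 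This integral is controlled via the orthogonality identity
$$
\int_{\Omega_1}|\varphi(\sigma,\omega_1)-\varphi_n(\sigma,\omega_1)|^2\,dm_{H1}=\sum_{m\geq 1}\frac{|c_m|^2(1-v_1(m,n))^2}{m^{2\sigma}} \to 0,
$$
together with a Mellin-type estimate for the sup-norm over $K$.

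\textbf{Main obstacle.} The essential technical input is the discrete second-moment bound produced by Gallagher's lemma: its dependence on the auxiliary shift $\tau$ must be only polynomial, so that it survives integration against the rapidly decaying $\Gamma$-factor in $\ell_n$. In the random case, the additional subtlety is ergodicity of the skew translation on $\Omega$, which again rests on the $\qq$-linear independence of the elements of $L(\alpha,h_1,h_2)$.
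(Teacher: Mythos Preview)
Your outline is correct and matches the approach the paper invokes (it does not give a self-contained proof, but refers to Lemma~3 of \cite{RK-KM-2017-Pal} and Lemma~4 of \cite{EB-AL-2015-RJ}, both of which implement precisely the Mellin--contour-shift/Gallagher scheme you describe for the deterministic relation).

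There is one genuine methodological difference worth flagging in the random case. The route the paper points to (via \cite{EB-AL-2015-RJ}) does \emph{not} apply Birkhoff--Khinchin to the full metric $\underline\varrho$ on the product $\Omega$. Instead it first establishes, for almost every $\omega_1$ (resp.\ $\omega_2$), a discrete second-moment bound of the same shape as \eqref{meanvalue-for-varphi} for $\varphi(\sigma_0+it,\omega_1)$ (resp.\ for $\zeta(s,\alpha,\omega_2;\gb)$), using ergodicity on each factor torus separately; this is why the paper singles out the linear independence of $\{\log(m+\alpha):m\in\nn_0\}$ rather than of all of $L(\alpha,h_1,h_2)$. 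With those a.e.\ moment bounds in hand, the Mellin--Gallagher argument is then repeated verbatim. Your alternative---applying the ergodic theorem directly to $F_n(\omega)=\underline\varrho(\underline Z(\us,\omega),\underline Z_n(\us,\omega))$ and then bounding $\int_\Omega F_n\,dm_H$---is legitimate (the integrand is bounded, and the shift relation $\underline Z(\us+ik\uhh,\omega)=\underline Z(\us,\Phi^k\omega)$ holds for both $\underline Z$ and $\underline Z_n$), but to close it you still need a uniform-in-$K$ estimate for $\int_{\Omega_1}\sup_{s\in K}|\varphi(s,\omega_1)-\varphi_n(s,\omega_1)|\,dm_{H1}$, and the cleanest way to get that is again via the Mellin representation combined with the $L^2(\Omega_1)$ bound $\int|\varphi(\sigma'+it,\omega_1)|^2\,dm_{H1}=\sum_m|c_m|^2 m^{-2\sigma'}$ on a contour surrounding $K$. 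So the two routes converge on the same analytic core; yours trades one invocation of Gallagher for one of dominated convergence over $\Omega$.

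One small correction: the ergodicity you need is not ``the linear-independence input of Lemma~\ref{RK-le-1}'' (that lemma is about weak convergence on the torus), but the content of Lemma~\ref{RK-le-4}; the underlying arithmetic hypothesis is of course the same.
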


This can be obtained in a way similar to Lemma~3 of \cite{RK-KM-2017-Pal}. Note that the linear independence of the elements of the set $L(\alpha,h_1,h_2)$ over
$\qq$ implies the linear independence of $\{\log (m+\alpha): m \in \nn_0\}$, which is used in the proof of the second part of the above lemma (similar to the argument in the proof of
Lemma 4 in  \cite{EB-AL-2015-RJ}).

In the third step, we examine one more probability measure, for $A \in {\mathcal B}(\underline{H})$, defined by
$$
P_{N,\omega}(A):=\frac{1}{N+1}\#\bigg\{0\leq k \leq N: \underline{Z}(s_1+ikh_1,s_2+ikh_2,\omega)\in A\bigg\}
$$
for almost all $\omega \in \Omega$.

\begin{lemma}\label{RK-le-3}
Assume that the elements of the set $L(\alpha,h_1,h_2)$ are linearly independent over $\qq$. Then the measures $P_N$ and $P_{N,\omega}$ both converge weakly to the same probability measure, which we denote by $P$, on $(\underline{H},{\mathcal B}(\underline{H}))$ as $N \to \infty$.
\end{lemma}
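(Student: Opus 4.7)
The plan is to combine Lemmas \ref{RK-le-1} and \ref{RK-le-2} via the standard three-step Bagchi--Laurin\v cikas scheme: tightness of the approximating family $\{P_n\}$, combined with a weak-convergence-via-approximation theorem (Theorem~4.2 of Billingsley's \emph{Convergence of Probability Measures}), reduces Lemma~\ref{RK-le-3} to what has already been established. Concretely I would first prove tightness of $\{P_n\}$. For each compact $K\subset D_1$ I would bound
\[
\limsup_{N\to\infty}\frac{1}{N+1}\sum_{k=0}^{N}\sup_{s_1\in K}\bigl|\varphi_n(s_1+ikh_1)\bigr|^2 \le M_K
\]
uniformly in $n$, and symmetrically for $\zeta_n(s_2,\alpha;\gb)$ on compact subsets of $D_2$. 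This hinges on the mean square bound \eqref{meanvalue-for-varphi} (together with its elementary analogue for the periodic Hurwitz piece), a Gallagher-type inequality converting the continuous $L^2$ bound on $\sigma=\sigma_0$ into a discrete mean over the progression $\{kh_j\}_{k=0}^{N}$, and Cauchy's integral formula to upgrade the $L^2$-on-a-line estimate to an $L^{\infty}$-on-$K$ estimate; the cut-off factors $v_1(m,n)$ and $v_2(m,n,\alpha)$ can be arranged to be uniformly harmless. Passing $N\to\infty$ through Lemma~\ref{RK-le-1} transfers the bound to $P_n$, Chebyshev produces compact sets $\underline K_\varepsilon\subset\underline H$ with $P_n(\underline K_\varepsilon)\ge 1-\varepsilon$ for every $n$, and Prokhorov yields a weakly convergent subsequence $P_{n_k}\Rightarrow P$.

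With $P_{N,n}\Rightarrow P_n$ (Lemma~\ref{RK-le-1}), $P_{n_k}\Rightarrow P$ from the previous paragraph, and the first half of Lemma~\ref{RK-le-2} reformulated (via Chebyshev applied to the bounded metric $\underline\varrho$) as
\[
\lim_{n\to\infty}\limsup_{N\to\infty}\frac{1}{N+1}\#\Bigl\{0\le k\le N:\underline\varrho\bigl(\underline Z(s_1+ikh_1, s_2+ikh_2),\,\underline Z_n(s_1+ikh_1, s_2+ikh_2)\bigr)\ge\delta\Bigr\}=0
\]
for every $\delta>0$, Theorem~4.2 of Billingsley yields $P_N\Rightarrow P$ along the full sequence $N$. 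Since $P_N$ does not depend on the subsequence $\{n_k\}$, every weak limit point of $\{P_n\}$ must coincide with $P$, so $\{P_n\}$ converges to a uniquely determined $P$. Running the same argument with the second half of Lemma~\ref{RK-le-2}, and using that Lemma~\ref{RK-le-1} assigns $P_{N,n,{\widehat\omega}}$ the \emph{same} limit $P_n$ as $P_{N,n}$ for almost every $\widehat\omega$, yields $P_{N,\omega}\Rightarrow P$ for almost every $\omega\in\Omega$.

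The main obstacle lies in the tightness step: one must prove the uniform-in-$n$ discrete mean square bound along the arithmetic progressions with steps $h_1$ and $h_2$, which demands a careful discrete Gallagher-type inequality and verification that the exponential cut-offs $v_1, v_2$ do not degrade the uniformity in $n$. Once tightness is secured, the remaining passages are formal applications of standard weak convergence machinery, completely parallel to the treatment in \cite{EB-AL-2015-LMJ} and \cite{RK-KM-2017-Pal}.
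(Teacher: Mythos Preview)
Your proposal is correct and follows essentially the same approach as the paper: the paper's proof consists only of a pointer to the analogous Lemma~5 in \cite{EB-AL-2015-RJ}, \cite{EB-AL-2015-LMJ}, listing precisely the ingredients you invoke---Lemmas~\ref{RK-le-1} and \ref{RK-le-2}, Gallagher's lemma, and Prokhorov's theorem---which constitute the standard Bagchi--Laurin\v cikas tightness-plus-approximation scheme you have spelled out in detail.
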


This is an analogue of Lemma~5 in \cite{EB-AL-2015-RJ} or Lemma~5 in
\cite{EB-AL-2015-LMJ}.
In the proof, Lemma~\ref{RK-le-1}, the both relations of Lemma~\ref{RK-le-2},  together with Gallagher's lemma (see \cite{HLM-1971}) and Prokhorov's theorem (see \cite{PB-1968}) are applied.

The final step of the proof of the discrete mixed limit theorem is the identification of the measure $P$ of Lemma~\ref{RK-le-3}.

\begin{lemma}\label{RK-le-4}
The probability measure $P$ coincides with the probability measure $P_\uZ$.
\end{lemma}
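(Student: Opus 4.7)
The plan is a standard Birkhoff-ergodic-theorem argument on the compact abelian group $\Omega$. First I would define a measurable, $m_H$-preserving transformation $\Phi:\Omega\to\Omega$ by
$$\Phi(\omega_1,\omega_2) = \bigl(\,(p^{-ih_1}\omega_1(p))_{p\in\pp}\,,\,((m+\alpha)^{-ih_2}\omega_2(m))_{m\in\nn_0}\,\bigr),$$
and observe, directly from the series expressions defining $\varphi(s_1,\omega_1)$ and $\zeta(s_2,\alpha,\omega_2;\gb)$, that $\underline{Z}(s_1+ikh_1,s_2+ikh_2,\omega)$ equals $\underline{Z}(\underline{s},\Phi^k\omega)$. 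Thus $P_{N,\omega}$ is the empirical distribution of the $\Phi$-orbit at $\omega$, pushed forward under the map $\omega\mapsto\underline{Z}(\underline{s},\omega)$.

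The decisive step is to establish that $\Phi$ is ergodic with respect to $m_H$. A character of the compact group $\Omega$ has the form
$$\chi(\omega) = \prod_{p\in\pp} \omega_1(p)^{k_p}\prod_{m\in\nn_0}\omega_2(m)^{l_m}$$
with finitely many nonzero integer exponents $k_p,l_m$, and one computes
$\chi(\Phi\omega) = \exp\bigl(-i(h_1\sum_{p}k_p\log p + h_2\sum_{m}l_m\log(m+\alpha))\bigr)\chi(\omega)$.
For $\chi$ to be $\Phi$-invariant, the exponent must lie in $2\pi\zz$; the hypothesized $\qq$-linear independence of $\{h_1\log p:p\in\pp\}\cup\{h_2\log(m+\alpha):m\in\nn_0\}\cup\{\pi\}$ then forces all $k_p$, all $l_m$, and the integer multiplier of $\pi$ to vanish. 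Hence only the trivial character is $\Phi$-invariant, and by the usual Hilbert-space spectral argument $\Phi$ is ergodic on $(\Omega,m_H)$.

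With ergodicity in hand, I would apply the Birkhoff-Khinchin pointwise ergodic theorem to the bounded measurable real-valued map $\omega\mapsto g(\underline{Z}(\underline{s},\omega))$, letting $g$ range over a countable family of bounded Lipschitz functions on $\underline{H}$ that determines weak convergence of probability measures. This yields, for $m_H$-almost every $\omega$,
$$\lim_{N\to\infty}\frac{1}{N+1}\sum_{k=0}^{N}g\bigl(\underline{Z}(\underline{s},\Phi^k\omega)\bigr) = \int_{\Omega}g(\underline{Z}(\underline{s},\omega))\,dm_H = \int_{\underline{H}}g\,dP_{\underline{Z}}.$$
By the Portmanteau theorem on the separable metric space $\underline{H}$, this amounts to $P_{N,\omega}\to P_{\underline{Z}}$ weakly for almost every $\omega$. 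Since Lemma~\ref{RK-le-3} already asserts $P_{N,\omega}\to P$ weakly for almost every $\omega$, uniqueness of weak limits forces $P = P_{\underline{Z}}$.

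The principal obstacle I anticipate is the ergodicity verification: one must check that the arithmetic hypothesis on $L(\alpha,h_1,h_2)$ really kills every non-trivial invariant character, and the inclusion of $\pi$ in $L(\alpha,h_1,h_2)$ is precisely what allows the $2\pi\zz$-ambiguity to be eliminated. A secondary technical point is selecting a suitable countable convergence-determining family of test functions $g$; this is routine once one exploits the separability of $H(D_1)$ and $H(D_2)$ under their uniform-convergence-on-compacta topologies.
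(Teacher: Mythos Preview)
Your proposal is correct and follows essentially the same route as the paper: define the rotation $\Phi$ on $\Omega$ by multiplication with $a_{\alpha,h_1,h_2}=\bigl((p^{-ih_1})_{p\in\pp},((m+\alpha)^{-ih_2})_{m\in\nn_0}\bigr)$, verify its ergodicity via the character argument using the $\qq$-linear independence of $L(\alpha,h_1,h_2)$, and then combine the Birkhoff--Khintchine theorem with Lemma~\ref{RK-le-3} to identify $P$ with $P_{\underline{Z}}$. Your write-up simply makes explicit the details (the invariant-character computation, the convergence-determining family of test functions) that the paper leaves to the cited references.
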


This is analogous to Theorem~4 of \cite{EB-AL-2015-LMJ}.
For the proof, the elements from ergodic theo\-ry are used. For $\omega \in \Omega$, we consider the one-parameter group of measurable measure preserving transformation of the torus $\Omega$ defined by \begin{align*}
\Phi_{\alpha,h_1,h_2}:=a_{\alpha,h_1,h_2}\omega, \quad \omega \in \Omega,
\end{align*}
where
$a_{\alpha,h_1,h_2}:=\big((p^{-ih_1}:p \in \pp),((m+\alpha)^{-ih_2}:m \in \nn_0)\big)$. Since the elements of the set $L(\alpha,h_1,h_2)$ are linearly independent over $\qq$, we can show that  the group $\Phi_{\alpha,h_1,h_2}$ is ergodic.  Therefore, using this fact together with Lemma~\ref{RK-le-3} and the Birkhoff-Khintchine theorem (see \cite{HCR-MRL-1967}), we obtain Lemma \ref{RK-le-4}.

Collecting all of these lemmas, we prove the assertion of Theorem~\ref{RK-KM-new-lim}.


\bigskip
Next we proceed to the proof of Theorem \ref{RK-KM-new-disc-univ}.

Assume that $\varphi\in\widetilde{S}$, and let $K_1,K_2,f_1(s)$ and $f_2(s)$ be as in the
statement of Theorem \ref{RK-KM-new-disc-univ}.    Let $M>0$ be sufficiently large such that
$K_1$ is included in
$$
D_M=\{s:\sigma_0<\sigma<1,|t|<M\}.
$$
Since $\varphi\in\widetilde{S}$, we see that
$D_{\varphi}=\{s:\sigma>\sigma_0,\sigma\neq 1\}$, so $D_M\subset D_{\varphi}$.
Also we can find $T>0$ such that $K_2$ is included in
$$
D_T=\{s:1/2<\sigma<1,|t|<T\}.
$$
We choose $D_1=D_M$ and $D_2=D_T$ and consider
an explicit form of the support $S_{\underline{Z}}$ of the probability measure $P_{\underline{Z}}$.
Define
$S_{\varphi}:=\{f \in H(D_M): f(s)\not =0 \ \text{for} \ D_M,\ \text{or} \ f(s) \equiv 0\}$.

\begin{lemma}\label{RK-le-5}
Suppose that the elements of the set $L(\alpha,h_1,h_2)$ are linearly independent over $\qq$. Then the support of the measure $P_{\underline{Z}}$ is the set
$S_{\underline{Z}}:=S_{\varphi}\times H(D_T)$.
\end{lemma}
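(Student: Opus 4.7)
The strategy is to exploit the product structure of the Haar measure $m_H = m_{H1} \times m_{H2}$ on $\Omega = \Omega_1 \times \Omega_2$. Since the first component $\varphi(s_1,\omega_1)$ of $\underline{Z}(\underline{s},\omega)$ depends only on $\omega_1$ and the second component $\zeta(s_2,\alpha,\omega_2;\gb)$ depends only on $\omega_2$, and since $\omega_1$ and $\omega_2$ are $m_H$-independent, the distribution $P_{\underline{Z}}$ factors as a product $P_\varphi \otimes P_\zeta$ on $\underline{H}=H(D_M)\times H(D_T)$, where $P_\varphi$ denotes the law of $\varphi(\cdot,\omega_1)$ on $H(D_M)$ and $P_\zeta$ the law of $\zeta(\cdot,\alpha,\omega_2;\gb)$ on $H(D_T)$. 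Because $H(D_M)$ and $H(D_T)$ are separable metric spaces, the support of a product measure equals the product of supports, so the task reduces to proving the two marginal identifications $\operatorname{supp}(P_\varphi)=S_\varphi$ and $\operatorname{supp}(P_\zeta)=H(D_T)$.

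For the second identification, the linear independence of $L(\alpha,h_1,h_2)$ over $\qq$ implies in particular the linear independence of $\{\log(m+\alpha):m\in\nn_0\}$ over $\qq$. Under this hypothesis, the now-standard Bagchi--Laurin\v{c}ikas density argument for periodic Hurwitz zeta-functions applies: one realises $\zeta(\cdot,\alpha,\omega_2;\gb)$ as the sum of the random Dirichlet series $\sum_{m\geq 0}b_m\omega_2(m)(m+\alpha)^{-s_2}$ and applies a Pechersky-type rearrangement theorem to the family $\{b_m(m+\alpha)^{-s_2}\}$ to conclude that every target element of $H(D_T)$ lies in the closed support.

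For the first identification, I would reuse the Euler-product argument employed in the proof of Theorem~\ref{RK-KM-univ} in \cite{RK-KM-2015}. Starting from a branch of the logarithm,
\begin{align*}
\log\varphi(s_1,\omega_1)=-\sum_{n=1}^{\infty}\sum_{j=1}^{l}\log\bigl(1-a_j(p_n)\omega_1(p_n)p_n^{-s_1}\bigr),
\end{align*}
one invokes the Pechersky-type rearrangement theorem, together with the mean-square condition \eqref{prime-mean-square} on $|a(p)|^2$ and the (trivial) linear independence of $\{\log p:p\in\pp\}$ over $\qq$, to show that the range of $\omega_1\mapsto\log\varphi(\cdot,\omega_1)$ is dense in $H(D_M)$. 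Exponentiating, and invoking Hurwitz's theorem to handle the possibility that the limit vanishes identically, yields $\operatorname{supp}(P_\varphi)=S_\varphi$. The main obstacle is purely technical: the verification of the hypotheses of the Pechersky-type rearrangement theorem in the setting of the class $\widetilde{S}$ restricted to $D_M$; however, this verification has already been carried out in the cited works of the authors, so in the present context it amounts to specifying the strip and transcribing those arguments.
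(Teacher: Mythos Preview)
Your proof is correct and follows essentially the same approach as the paper: factor $P_{\underline Z}$ as a product via $m_H=m_{H1}\times m_{H2}$, use separability of $H(D_M)$ and $H(D_T)$ so that the support of the product is the product of the supports, and then identify each marginal support. The only difference is presentational: the paper simply cites Lemma~5.12 of \cite{JST-2007} for $\operatorname{supp}(P_\varphi)=S_\varphi$ and \cite{AL-2006} for $\operatorname{supp}(P_\zeta)=H(D_T)$, whereas you sketch the underlying Pechersky-type density arguments behind those citations.
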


\begin{proof}
The support of the random variable $\varphi(s_1,\omega_1)$ is $S_{\varphi}$ (see Lemma~5.12 of \cite{JST-2007}). Note that here we essentially need the assumption that
$\varphi\in\widetilde{S}$ (see Remark~4.4 of \cite{RK-KM-2015}).
Since the elements of the set $L(\alpha,h_1,h_2)$ are linearly independent, it follows that $\{\log(m+\alpha): m \in \nn_0\}$ is linearly independent. Therefore the support of the random element $\zeta(s_2,\alpha,\omega_2;\ga)$ is the whole of $H(D_T)$ (see \cite{AL-2006}).

The space $H(D)$ is separable, then we have that
${\mathcal B}(\underline{H})={\mathcal B}(H(D_M))\times {\mathcal B}(H(D_T))$ (see \cite{PB-1968}), and it suffices to study the measure $P_\uZ$  on the sets $A$, where $A=A_1 \times A_2$ with  $A_1\in H(D_M)$ and $A_2 \in H(D_T)$.

In view of the definition of the measure $m_H$ as a product of the measures $m_{H1}$ and $m_{H2}$ (see Section~\ref{intro}), we obtain
\begin{align*}
P_\uZ(A)&=m_{H1}\{\omega_1 \in \Omega_1: \varphi(s,\omega_1) \in A_1\}\times m_{H2}\{\omega_2 \in \Omega_2: \zeta(s,\alpha,\omega_2;\gb) \in A_2\}.
\end{align*}
This shows that $P_\uZ(A)=1$ 
if and only if
$m_{H1}\{\omega_1 \in \Omega_1: \varphi(s,\omega_1) \in A_1\}=1$ and $m_{H2}\{\omega_2 \in \Omega_2: \zeta(s,\alpha,\omega_2;\gb) \in A_2\}=1$.
Therefore the minimal set $A$ such that $P_\uZ(A)=1$ is the set $A=S_\uZ$.
\end{proof}

The proof of Theorem~\ref{RK-KM-new-disc-univ}  is a consequence of Theorem~\ref{RK-KM-new-lim}, Lemma~\ref{RK-le-5} and  the Mergelyan theorem, which asserts that any continuous function $f(s)$ on a compact subset $K \subset \cc$ with connected complement  which is analytic in the interior of $K$ is approximable uniformly on $K$ by
certain suitably chosen polynomials  $p(s)$ (see \cite{SNM-1952}).
The argument is standard and the same as in Section 4 of \cite{RK-KM-2017-Pal}, so we omit the details.

\section{Generalizations}

As we noted in Section~\ref{sor}, in this paper we will give certain generalizations of Theo\-rems~\ref{RK-KM-new-lim} and \ref{RK-KM-new-disc-univ}, namely, we extend to the case of a collection of periodic Hurwitz zeta-functions.  In this section, we will focus on the study of the joint discrete mixed universality for such collections of zeta-functions with periodic coefficients whose common differences are not necessarily the same as each other. This is the main novelty of the present paper, since by the authors' knowledge, there are several former articles which treat such results in the continuous case (see \cite{AL-2010}, \cite{RK-BRK-2018}), but only few work in the discrete case. 
For some special $\varphi$, there are papers \cite{RK-2018}, \cite{AL-2018}
in the discrete case.
In \cite{RK-2018}, distinct common differences are treated but for a single periodic 
Hurwitz zeta-functions, while in \cite{AL-2018} a collection of periodic Hurwitz
zeta-functions are considered but with the same common difference. 

Let $\gb_j=\{b_{mj}: m \in \nn_0\}$ be a periodic sequence of complex numbers with a minimal period $l_j\in \nn$, $\alpha_j$ be  a fixed parameter, $0<\alpha_j<1$, $j=1,...,r$. Suppose that $\zeta(s,\alpha_j;\gb_j)$ is the corresponding periodic Hurwitz zeta-function for $j=1,...,r$.

Now let $\uA:=\big(\alpha_1,...,\alpha_r\big)$, and, for $h_1>0, h_{21}>0,...,h_{2r}>0$, let $\uhh:=\big(h_1,h_{21},...,h_{2r}\big)$. Define the set
\begin{align*}
L(\uA,\uhh) :=&L(\alpha_1,...,\alpha_r,h_1,h_{21},...,h_{2r})\cr
=&\{h_1 \log p: p \in \pp\}\bigcup_{j=1}^{r}\{h_{2j}\log(m+\alpha_j): m\in \nn_0\}\cup\{\pi\}.
\end{align*}

\begin{theorem}\label{RK-KM-new-joint-disc-univ}
Let $\varphi\in\widetilde{S}$.   Suppose that the elements of the set $L(\uA,\uhh)$ are linearly independent over $\qq$, and $K_1$ and $f_1(s)$ satisfy the conditions of Theorem~\ref{RK-KM-univ}. Let $K_{2j}$  be compact subsets of the strip $D\big(\frac{1}{2},1\big)$ with connected complements,  $f_{2j}(s)$ be continuous functions in $K_{2j}$  and analytic in the interior of $K_{2j}$ for  all $j=1,...,r$, respectively. Then, for every $\varepsilon>0$, it holds that
\begin{align*}
\liminf_{N \to \infty}\frac{1}{N+1}\#\bigg\{
0\leq k \leq N: & \sup_{s \in K_1}|\varphi(s+ikh_1)-f_0(s)|<\varepsilon, \cr
& \sup_{1 \leq j \leq r}\sup_{s \in K_{2j}}|\zeta(s+ikh_{2j},\alpha_j;\gb_j)-f_{2j}(s)|<\varepsilon
\bigg\}>0.
\end{align*}
\end{theorem}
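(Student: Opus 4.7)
The plan is to lift the two-stage argument of Theorems~\ref{RK-KM-new-lim} and~\ref{RK-KM-new-disc-univ} to the $(r+1)$-tuple $\uZ^{(r)}(\us):=(\varphi(s_0),\zeta(s_1,\alpha_1;\gb_1),\ldots,\zeta(s_r,\alpha_r;\gb_r))$ with discrete shifts $(ikh_1,ikh_{21},\ldots,ikh_{2r})$: first prove the corresponding discrete joint limit theorem, then identify the support of the limit measure, and finally invoke the Mergelyan theorem. Choose $M>0$ and $T_j>0$ so large that $K_1\subset D_M$ and $K_{2j}\subset D_{T_j}$, and put $D_1=D_M$, $D_{2j}=D_{T_j}$.

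Enlarge the probability space by replacing $\Omega_2$ with a product $\prod_{j=1}^{r}\Omega_2^{(j)}$ of $r$ independent copies of $\prod_{m\in\nn_0}\gamma_m$ carrying the product Haar measure $m_H^{(r)}$; on the space $\Omega_1\times\prod_{j=1}^{r}\Omega_2^{(j)}$ define the $H(D_1)\times\prod_{j=1}^{r}H(D_{2j})$-valued random element with components $\varphi(s_0,\omega_1)$ and $\zeta(s_j,\alpha_j,\omega_2^{(j)};\gb_j)$, and let $P_{\uZ}^{(r)}$ denote its distribution. The first goal is to show that
\[
Q_N(A):=\frac{1}{N+1}\#\bigl\{0\leq k\leq N:(\varphi(s_0+ikh_1),(\zeta(s_j+ikh_{2j},\alpha_j;\gb_j))_{j=1}^{r})\in A\bigr\}
\]
converges weakly to $P_{\uZ}^{(r)}$, following the four-lemma skeleton of Section~3: introduce the absolutely convergent truncations $\varphi_n$ and $\zeta_n(\cdot,\alpha_j;\gb_j)$, establish their discrete limit theorem, transfer mean approximation in the product metric, merge the randomized and un-randomized limits via Gallagher's lemma together with Prokhorov's theorem, and identify the common limit by the Birkhoff--Khintchine theorem applied to the shift $\omega\mapsto a_{\uA,\uhh}\cdot\omega$ with $a_{\uA,\uhh}=((p^{-ih_1})_p,((m+\alpha_1)^{-ih_{21}})_m,\ldots,((m+\alpha_r)^{-ih_{2r}})_m)$. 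The key input is the weak convergence of the empirical measure of the orbits $k\mapsto a_{\uA,\uhh}^{k}$ on the enlarged torus to its Haar measure; by Weyl's criterion this reduces to showing that every non-trivial character produces an exponential sum whose frequency is a non-zero $\qq$-linear combination of elements of $L(\uA,\uhh)$, which is forbidden by hypothesis, and the same linear independence makes the shift ergodic.

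For the support identification, Lemma~5.12 of \cite{JST-2007} together with $\varphi\in\st$ gives $\mathrm{supp}\,\varphi(\cdot,\omega_1)=S_{\varphi}$; linear independence of $L(\uA,\uhh)$ forces linear independence of each $\{\log(m+\alpha_j):m\in\nn_0\}$ over $\qq$, so \cite{AL-2006} yields $\mathrm{supp}\,\zeta(\cdot,\alpha_j,\omega_2^{(j)};\gb_j)=H(D_{2j})$. The product structure of $m_H^{(r)}$ then gives $\mathrm{supp}\,P_{\uZ}^{(r)}=S_{\varphi}\times\prod_{j=1}^{r}H(D_{2j})$. Apply Mergelyan to approximate $f_1$ by a non-vanishing polynomial on $K_1$ and each $f_{2j}$ by a polynomial on $K_{2j}$; the resulting $(r+1)$-tuple lies in the interior of $\mathrm{supp}\,P_{\uZ}^{(r)}$ for the product metric, and combining this with the limit theorem via the portmanteau implication for open sets yields the asserted positive liminf, exactly as in Section~4 of \cite{RK-KM-2017-Pal}.

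The genuinely new difficulty is the Weyl-equidistribution argument on the enlarged torus: with $r+2$ distinct scaling constants entering simultaneously (the $h_1$, each $h_{2j}$, and the $\pi$), one must verify that no hidden rational relation among the mixed logarithms $h_1\log p$, $h_{2j}\log(m+\alpha_j)$ and multiples of $\pi$ can resurrect a non-vanishing Weyl sum for some non-trivial character. The set $L(\uA,\uhh)$ is tailored precisely to rule out every such relation at once, and once this equidistribution is in hand every remaining step is a direct transcription of the two-function arguments of Section~3.
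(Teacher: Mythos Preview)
Your proposal is correct and follows essentially the same route as the paper: the authors likewise enlarge the torus to $\uO=\Omega_1\times\Omega_{21}\times\cdots\times\Omega_{2r}$, prove the discrete joint limit theorem (their Theorem~\ref{RK-KM-new-joint-lim}) via the four-lemma skeleton with the torus equidistribution established by the Fourier/character argument you describe (their Lemma~\ref{RK-le-j-torus}), identify the support as $S_\varphi\times H^r(D_T)$ using Steuding's Lemma~5.12 and \cite{AL-2006}, and finish with Mergelyan. The only cosmetic difference is that the paper uses a single $D_T$ containing all the $K_{2j}$ rather than separate rectangles $D_{T_j}$, which changes nothing.
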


\begin{remark}
It is possible to generalize further the above theorem to the case of the collection of
$\zeta(s,\alpha_j;\gb_{jl})$ as in Theorem \ref{RK-KM-gen-univ}.
\end{remark}

For the proof of Theorem~\ref{RK-KM-new-joint-disc-univ}, we use a functional mixed joint discrete limit theorem in the sense of weakly convergent probability measures for the vector $\big(\varphi(s),\zeta(s,\alpha_1;\gb_1),$ $...,\zeta(s,\alpha_r;\gb_r)\big)$ in the space of analytic functions. This theorem generalizes Theorem~\ref{RK-KM-new-lim}.

Let $\varphi\in\mathcal{M}$.
For the quantities $\omega_1$,  $\Omega_1$, $m_{H1}$,  $\omega_2$,  $\Omega_2$ and $m_{H2}$ we keep the same notations and mea\-nings as in the previous sections.

Suppose that $\Omega_{2j}=\Omega_2$, for all $j=1,...,r$, and put
$
\uO:=\Omega_1\times \Omega_{21} \times... \times\Omega_{2r}.
$
The torus $\uO$ is a compact topological Abelian group. Therefore, on $(\uO,{\mathcal B}(\uO))$, there exists a pro\-ba\-bi\-li\-ty Haar measure $m_{Hr}$ defined as the product  $m_{H1}\times m_{H21}\times\cdots\times m_{H2r}$, where $m_{H2j}$ is the Haar measure defined on the space $(\Omega_{2j},{\mathcal B}(\Omega_{2j}))$ for all $j=1,...,r$ (see \cite{JST-2007}). This leads to the probability space $(\uO,{\mathcal B}(\uO),m_{Hr})$. Denote by $\omega_1(p)$ (resp. $\omega_{2j}(m)$) the projection of $\omega_1 \in \Omega_1$ (resp. $\omega_{2j}\in\Omega_{2j}$) onto the coordinate space $\gamma_p$, $p\in\pp$ (resp. $\gamma_m$, $m \in \nn_0$), and further define $\omega_1(m)$ as before.

Elements of the torus $\uO$ are generally written as $\uom:=\big(\omega_1,\omega_{21},...,\omega_{2r}\big)$, but sometimes we use ${\widehat \uom}:=\big({\widehat \omega_1},{\widehat \omega_{21}},...,{\widehat \omega}_{2r}\big)\in \uO$ for a fixed element.

Let $D_1$ be an open subset of $D_{\varphi}$, $D_{2j}$ be an open subset of
$D_{\zeta_j}$, where $\zeta_j=\zeta(s_{2j},\alpha_j;\gb_j)$, $j=1,..., r$, and put
$$
\uH_r=H(D_1)\times H(D_{21})\times ...\times H(D_{2r}).
$$
Let $\us=:(s_1,s_{21},...,s_{2r})\in D_1\times D_{21}\times\cdots\times D_{2r}$.    On the probability space $(\uO,{\mathcal B}(\uO),m_{Hr})$, define an $\uH_r$-valued random element by the formula
$$
\underline{Z}(\us,\uA,\uom):=\big(\varphi(s_1,\omega_1),\zeta(s_{21},\alpha_1,\omega_{21};\gb_1),...,\zeta(s_{2r},\alpha_r,\omega_{2r};\gb_r)\big),
$$
where
$$
\zeta(s_{2j},\alpha_j,\omega_{2j};\gb_j)=\sum_{m=0}^{\infty}\frac{b_{mj}\omega_{2j}(m)}{(m+\alpha_j)^{s_{2j}}}, \quad j=1,...,r,
$$
and $\varphi(s_1,\omega_1)$ is given by  \eqref{RK-for-0}
(for the details, see \cite{AL-2010}, \cite{RK-KM-2017-BAMS}).
Denote the distribution of the random element $\underline{Z}(\us,\uA,\uom)$ by $\uuZ$, i.e.,
\begin{align*}
\uuZ(A):=m_{Hr}\big\{\uom \in \uO: \underline{Z}(\us,\uA,\uom) \in A\big\}, \quad A \in {\mathcal B}(\uH_r).
\end{align*}

Now we generalize the previous notation of $\uZ({\underline s})$ to define
$$
\uZ(\us):=\big(\varphi(s_1),\zeta(s_{21},\alpha_1;\gb_1),...,\zeta(s_{2r},\alpha_r;\gb_r)\big).
$$
We will show the following joint discrete mixed theorem.

\begin{theorem}\label{RK-KM-new-joint-lim}
Let $\varphi\in\mathcal{M}$.
Suppose that the elements of the set $L(\uA,\uhh)$ are linearly independent over $\qq$. Then, the measure ${\up}_N$ defined by  
\begin{align*}
{\up}_N(A):=\frac{1}{N+1}\#\bigg\{&0 \leq k \leq N:  \big(\uZ(s_1+ikh_1,s_{21}+ikh_{21},...,s_{2r}+ikh_{2r})\big)\in A\bigg\},
\end{align*}
$A \in {\mathcal B}(\uH_r)$,
converges weakly to $\uuZ$ as $N \to \infty$.
\end{theorem}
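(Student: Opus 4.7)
The plan is to follow the same four-step scheme used to prove Theorem~\ref{RK-KM-new-lim}, adapting each lemma from the two-coordinate setting to the $(r+1)$-coordinate setting on the torus $\uO=\Omega_1\times\Omega_{21}\times\cdots\times\Omega_{2r}$. First, I would define the truncated analogues
$$
\uZ_n(\us)=\bigl(\varphi_n(s_1),\zeta_n(s_{21},\alpha_1;\gb_1),\ldots,\zeta_n(s_{2r},\alpha_r;\gb_r)\bigr),
$$
$$
\uZ_n(\us,\uom)=\bigl(\varphi_n(s_1,\omega_1),\zeta_n(s_{21},\alpha_1,\omega_{21};\gb_1),\ldots,\zeta_n(s_{2r},\alpha_r,\omega_{2r};\gb_r)\bigr),
$$
using the weights $v_1(m,n)$ and, for each $j$, $v_{2j}(m,n,\alpha_j)=\exp(-((m+\alpha_j)/(n+\alpha_j))^{\sigma_0^*})$, so that all these Dirichlet series converge absolutely in $\Re s>1/2$.

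The first step is a discrete limit theorem for $\uZ_n$: the measures
$$
P_{N,n}(A)=\frac{1}{N+1}\#\bigl\{0\le k\le N:\uZ_n(s_1+ikh_1,s_{21}+ikh_{21},\ldots,s_{2r}+ikh_{2r})\in A\bigr\}
$$
and $P_{N,n,\widehat{\uom}}$ both converge weakly to a common limit $P_n$ on $(\uH_r,\mathcal{B}(\uH_r))$. The heart of this step is the equidistribution statement
$$
\frac{1}{N+1}\#\Bigl\{0\le k\le N:\bigl((p^{-ikh_1})_{p\in\pp},((m+\alpha_1)^{-ikh_{21}})_{m\in\nn_0},\ldots,((m+\alpha_r)^{-ikh_{2r}})_{m\in\nn_0}\bigr)\in A\Bigr\}\to m_{Hr}(A),
$$
which follows from the Weyl criterion applied to the torus $\uO$, using exactly the linear independence of $L(\uA,\uhh)$ over $\qq$ (the role of $\pi$ in $L(\uA,\uhh)$ ensures that all the exponents $ikh_1\log p$, $ikh_{2j}\log(m+\alpha_j)$ avoid $2\pi\zz$ nontrivially). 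Continuity of the finite-dimensional approximants, together with the absolute convergence of the defining series, transfers this equidistribution to weak convergence in $\uH_r$ by the standard continuous-mapping argument.

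The second step is the mean approximation lemma: for every compact $K\subset D_1\times D_{21}\times\cdots\times D_{2r}$,
$$
\lim_{n\to\infty}\limsup_{N\to\infty}\frac{1}{N+1}\sum_{k=0}^N\underline{\varrho}_r\bigl(\uZ(\us+ik\uhh),\uZ_n(\us+ik\uhh)\bigr)=0,
$$
and the analogous statement with $\omega$ inserted, valid for $m_{Hr}$-almost all $\uom$. The $\varphi$-component and each $\zeta(\cdot,\alpha_j;\gb_j)$-component are treated separately, reusing verbatim the mean-square arguments of Lemma~\ref{RK-le-2} (the mean value estimate \eqref{meanvalue-for-varphi} for $\varphi$ and the analogous second moment for each periodic Hurwitz component, which only needs the linear independence of $\{\log(m+\alpha_j):m\in\nn_0\}$ for each individual $j$, a consequence of our stronger hypothesis). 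Combining this with Step~1 via Gallagher's lemma and Prokhorov's theorem (exactly as in Lemma~\ref{RK-le-3}) yields that $\up_N$ and its $\uom$-shifted version $\up_{N,\uom}$ converge weakly to a common limit measure $P$ on $\uH_r$.

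The final, and most delicate, step is identifying $P=\uuZ$. Following Lemma~\ref{RK-le-4}, I would consider the one-parameter group of measure-preserving transformations
$$
\Phi_{\uA,\uhh}(\uom)=a_{\uA,\uhh}\cdot\uom,\quad a_{\uA,\uhh}=\bigl((p^{-ih_1})_{p\in\pp},((m+\alpha_1)^{-ih_{21}})_{m\in\nn_0},\ldots,((m+\alpha_r)^{-ih_{2r}})_{m\in\nn_0}\bigr),
$$
acting on $\uO$. The main obstacle is to verify that $\Phi_{\uA,\uhh}$ is ergodic with respect to $m_{Hr}$; this is the place where the full strength of the linear independence of $L(\uA,\uhh)$ over $\qq$ is needed, because a nontrivial $\Phi_{\uA,\uhh}$-invariant character of $\uO$ would produce a nontrivial $\qq$-linear relation among the elements of $L(\uA,\uhh)$ (the $\pi$ in $L(\uA,\uhh)$ rules out the rational-cycle case). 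Granted ergodicity, the Birkhoff--Khintchine theorem applied to the $\uom$-version, together with the weak convergence of $P_{N,\uom}$ to $P$ from Step~3, forces $P$ to coincide with the distribution of $\uZ(\us,\uA,\uom)$, namely $\uuZ$. This completes the proof of Theorem~\ref{RK-KM-new-joint-lim}.
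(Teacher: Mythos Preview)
Your proposal is correct and follows essentially the same four-step scheme as the paper's own proof: truncated series and a torus equidistribution lemma (the paper phrases this via the Fourier transform method on $\uO$, which is the same tool as your Weyl criterion), mean approximation of $\uZ$ by $\uZ_n$ componentwise, passage to the limit via Prokhorov and Gallagher, and identification of the limit using ergodicity of the shift by $a_{\uA,\uhh}$ together with Birkhoff--Khintchine. You have also correctly located the two places where the full linear independence of $L(\uA,\uhh)$ (including the element $\pi$) is used.
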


\begin{proof}
The proof of the above theorem is quite similar to that for Theorem~\ref{RK-KM-new-lim} (which is the case $r=1$). We give a sketch with a special emphasis to the complicated places in the proof.

As natural generalizations of $\uZ(s_1+ikh_1,s_2+ikh_2)$, $\uZ_n(s_1+ikh_1,s_2+ikh_2)$, $\uZ(s_1+ikh_1,s_2+ikh_2,\omega)$ and $\uZ_n(s_1+ikh_1,s_2+ikh_2,\omega)$, we define $\uZ(s_1+ikh_1,s_{21}+ikh_{21},...,s_{2r}+ikh_{2r})$, $\uZ_n(s_1+ikh_1,s_{21}+ikh_{21},...,s_{2r}+ikh_{2r})$, $\uZ(s_1+ikh_1,s_{21}+ikh_{21},...,s_{2r}+ikh_{2r},\uom)$ and $\uZ_n(s_1+ikh_1,s_{21}+ikh_{21},...,s_{2r}+ikh_{2r},\uom)$, respectively, in an obvious manner.    We can further define the quantities $\up_{N,n}$, $\up_{N,n,{\widehat \uom}}$ and $\up_{N,\uom}$ similarly to
$P_{N,n}$, $P_{N,n,{\widehat \omega}}$ and $P_{N,\omega}$, respectively.

Similar to Lemma~\ref{RK-le-1}, we can show that both measures $\up_{N,n}$ and $\up_{N,n,{\widehat \uom}}$ converge weakly to a certain probability measure $\up_n$ as $N \to \infty$. In the proof, a key role is played by a limit theorem on the torus $\uO$, which we separate as the lemma.

\begin{lemma}\label{RK-le-j-torus}
Suppose that the elements of the set $L(\uA,\uhh)$ are linearly independent over $\qq$. Then, on $(\uO,{\mathcal B}(\uO))$, the pro\-ba\-bi\-li\-ty measure
\begin{align*}
{\underline Q}_N(A):=\frac{1}{N+1}\#\big\{0\leq k \leq N:
\big(&
\big(p^{-ikh_1}: p \in \pp\big),\big((m+\alpha_1)^{-ikh_{21}}: m \in \nn_0
\big),...,\cr &
\big((m+\alpha_r)^{-ikh_{2r}}: m \in \nn_0
\big)
\big) \in A
\big\}, \quad A \in {\mathcal B}(\uO),
\end{align*}
converges weakly to the Haar measure $m_{Hr}$ as $N \to \infty$.
\end{lemma}

\begin{proof}
For the proof, we use the Fourier transformation method (see \cite{AL-1996}). The dual group of $\uO$ is isomorphic to the group
$$
G:=\bigg(
\bigoplus_{p \in \pp}\zz_{p}
\bigg)
\bigoplus_{j=1}^{r}
\bigg(
\bigoplus_{m \in \nn_0}\zz_{mj}
\bigg),
$$
where $\zz_p=\zz$ for all $p \in \pp$ and $\zz_{mj}=\zz$ for all $m \in \nn_0$, $j=1,...,r$. We take an element $({\underline k},{\underline l}_1,...,{\underline l}_r):=\big(
(k_p: p \in \pp),(l_{m1}: m\in \nn_0),...,(l_{mr}: m \in \nn_0)
\big)\in G$, where only a finite number of integers $k_p$ and $l_{m1},...,l_{mr}$ are nonzero, and acts on $\uO$ as
\begin{align*}
(\omega_1,\omega_{21},...,\omega_{2r})\to
(\omega_1^{\underline k},\omega_{21}^{{\underline l}_1},...,\omega_{2r}^{{\underline l}_r})=
\prod_{p \in \pp}\omega_1^{k_p}(p)\prod_{j=1}^{r}\prod_{m \in \nn_0}\omega_{2j}^{l_{mj}}(m).
\end{align*}

Denote by  ${\underline g}_N({\underline k},{\underline l}_1,...,{\underline l}_r)$ the Fourier transform of the measure ${\underline Q}_N(A)$ for $({\underline k},{\underline l}_1,...,{\underline l}_r) \in G$, i.e.,
$$
{\underline g}_N({\underline k},{\underline l}_1,...,{\underline l}_r)=\int_{\uO}\bigg(
\prod_{p \in \pp}\omega_1^{k_p}(p)\prod_{j=1}^{r}\prod_{m \in \nn_0}\omega_{2j}^{l_{mj}}(m)
\bigg) d {\underline Q}_N,
$$
where as above only finite number of integers $k_p$ and $l_{mj}$ are non-zero. Thus, by the definition of ${\underline Q}_N$,
\begin{align}\label{RK-for-1t}
{\underline g}_N({\underline k},{\underline l}_1,...,{\underline l}_r)=&
\frac{1}{N+1}\sum_{k=0}^{N}\prod_{p \in \pp}p^{-ikh_1 k_p}
\prod_{j=1}^{r}\prod_{m \in \nn_0}(m+\alpha_j)^{-ikh_{2j} l_{mj}}\cr
=&\frac{1}{N+1}\sum_{k=0}^{N}\exp\bigg\{
-ik\bigg(
\sum_{p \in \pp}h_1k_p\log p
+
\sum_{j=1}^{r}\sum_{m \in \nn_0}h_{2j}l_{mj}\log(m+\alpha_j)\bigg)
\bigg\}. \cr
\end{align}
Obviously
\begin{align}\label{RK-for-2t}
\sum_{p \in \pp}h_1k_p\log p
+
\sum_{j=1}^{r}\sum_{m \in {\nn_0}}h_{2j}l_{mj}\log(m+\alpha_j)=0
\end{align}
if ${\underline k}={\underline 0}$, ${\underline l}_{1}={\underline 0},...,{\underline l}_{r}={\underline 0}$.
On the other hand, we observe that
\begin{align}\label{RK-for-3t}
\exp\bigg\{
-i\bigg(
\sum_{p \in \pp}h_1k_p\log p
+
\sum_{j=1}^{r}\sum_{m \in \nn_0}h_{2j}l_{mj}\log(m+\alpha_j)\bigg)
\bigg\}\not =1
\end{align}
for $({\underline k},{\underline l}_1,...,{\underline l}_r)\not =({\underline 0},{\underline 0},...,{\underline 0})$.
Indeed, if \eqref{RK-for-3t} is false, then
\begin{align*}
\sum_{p \in \pp}h_1k_p\log p
+
\sum_{j=1}^{r}\sum_{m \in \nn_0}h_{2j}l_{mj}\log(m+\alpha_j)=2 \pi a
\end{align*}
with some $a \in \zz$. But this contradicts to the linear independence of the set $L(\uA,\uhh)$.
Therefore, from \eqref{RK-for-2t} and \eqref{RK-for-3t} together with \eqref{RK-for-1t} we get
\begin{align*}
{\underline g}_N({\underline k},{\underline l}_1,...,{\underline l}_r)=
\begin{cases}
1   \quad  \text{if} \quad ({\underline k},{\underline l}_1,...,{\underline l}_r) =({\underline 0},{\underline 0},...,{\underline 0}), \cr
\frac{
1-\exp\big\{-i(N+1)\big(
\sum_{p \in \pp}h_1k_p\log p+\sum_{j=1}^{r}\sum_{m \in \nn_0}h_{2j}l_{mj}\log(m+\alpha_j)
\big)\big\}
}{
(N+1)
\big(1-\exp\big\{-i\big(\sum_{p \in \pp}h_1k_p\log p+\sum_{j=1}^{r}\sum_{m \in \nn_0}h_{2j}l_{mj}\log(m+\alpha_j)\big)\big\}
\big)} \cr
\qquad \text{if} \quad
({\underline k},{\underline l}_1,...,{\underline l}_r) \not =({\underline 0},{\underline 0},...,{\underline 0}).
\end{cases}
\end{align*}
Hence,
$$
\lim_{N  \to \infty}{\underline g}_N({\underline k},{\underline l}_1,...,{\underline l}_r)=
\begin{cases}
1 & \text{if } \quad ({\underline k},{\underline l}_1,...,{\underline l}_r)=({\underline 0},{\underline 0},...,{\underline 0}), \cr
0 & \text{otherwise}.
\end{cases}
$$

This, taking into account the continuity theorem for probability measures on compact groups (see \cite{HH-1977}), we obtain the assertion of the lemma.
\end{proof}

Now we continue the proof of Theorem \ref{RK-KM-new-joint-lim}.

Our next task is to pass from $\up_{N,n}$ to $\up_N$.   For this purpose we approximate in mean  $\uZ(s_1+ikh_1,s_{21}+ikh_{21},...,s_{2r}+ikh_{2r})$  by $\uZ_n(s_1+ikh_1,s_{21}+ikh_{21},...,s_{2r}+ikh_{2r})$ and $\uZ(s_1+ikh_1,s_{21}+ikh_{21},...,s_{2r}+ikh_{2r},\uom)$ by $\uZ_n(s_1+ikh_1,s_{21}+ikh_{21},...,s_{2r}+ikh_{2r},\uom)$, respectively,  as in Lemma~\ref{RK-le-2}. This can be done using known mean value results (see  \cite{RK-2000}, \cite{AL-RM-2009}).

Then we can show that both measures $\up_{N}$ and $\up_{N,\uom}$ converges weakly to a certain probability measure $\up$, similar to Lemma~\ref{RK-le-3}. Finally, using ergodic theory, we prove that $\up=\uuZ$. It is an analogue of Lemma~\ref{RK-le-4}, where instead of $a_{\alpha,h_1,h_2}$ and $\omega \in \Omega$ we use
$$a_{\uA,\uhh}:=\big((p^{ih_1}: p \in \pp),((m+\alpha_1)^{-ih_{21}}: m \in \nn_0),
\ldots,((m+\alpha_r)^{-ih_{2r}}: m \in \nn_0)\big)$$
and $\uom \in \uO$, respectively. This completes the proof of Theorem~\ref{RK-KM-new-joint-lim}.
\end{proof}

\begin{proof}[Proof of Theorem~\ref{RK-KM-new-joint-disc-univ}]

First we construct the support ${\underline S}_\uZ$ of the probability measure $\uuZ$.
Let $S_\varphi$ be the same as in Lemma~\ref{RK-le-5}. Arguing in a way similar
to the case of the measure $P_\uZ$, we can prove  that the support of the measure $\uuZ$ is the set ${\underline S}_\uZ:=S_\varphi \times H^{r}(D_T)$.

The rest of the proof is again standard, using Theorem \ref{RK-KM-new-joint-lim} and
Mergelyan's theorem, and we omit the details.

\end{proof}


\end{document}